\DeclareMathOperator{\Rep}{Rep}
\DeclareMathOperator{\GL}{GL}
\DeclareMathOperator{\cl}{cl}
\DeclareMathOperator{\ab}{ab}
\DeclareMathOperator{\CH}{CH}
\DeclareMathOperator{\CHM}{CHM}
\DeclareMathOperator{\DM}{DM}
\DeclareMathOperator{\NumM}{NumM}
\DeclareMathOperator{\VHS}{VHS}
\DeclareMathOperator{\PL}{PL}
\DeclareMathOperator{\Immaginario}{Im}
\DeclareMathOperator{\interiore}{int}
\DeclareMathOperator{\End}{End}
\DeclareMathOperator{\Hom}{Hom}
\DeclareMathOperator{\Id}{Id}
\DeclareMathOperator{\Lef}{Lef}
\DeclareMathOperator{\op}{op}
\DeclareMathOperator{\Sym}{Sym}
\DeclareMathOperator{\Var}{Var}
\DeclareMathOperator{\Ab}{Ab}
\DeclareMathOperator{\Ve}{Vec}
\renewcommand{\Im}{\Immaginario}
\renewcommand{\int}{\interiore}
\renewcommand{\tilde}{\widetilde}
\newenvironment{npar}[1]{\begin{paragrafo_numerato_nome}\textbf{#1.}}{\end{paragrafo_numerato_nome}}
\newtheoremstyle{theorem}{11pt}{11pt}{\itshape}{}{\bfseries}{.}{.5em}{}
\newtheoremstyle{note}{11pt}{11pt}{}{}{\bfseries}{.}{.5em}{}
\theoremstyle{note}
\newtheorem{defin}{Définition}[section]
\newtheorem{rem}[defin]{Remarque}
\newtheorem{paragrafo_numerato}[defin]{}
\newtheorem{paragrafo_numerato_nome}[defin]{}
\theoremstyle{theorem}
\newtheorem{thm}[defin]{Théorème}
\newtheorem{cor}[defin]{Corollaire}
\newtheorem{lem}[defin]{Lemme}
\newtheorem{prop}[defin]{Proposition}
\newcommand{\Q}{\mathbb{Q}}
\newcommand{\Z}{\mathbb{Z}}
\newcommand{\C}{\mathbb{C}}
\newcommand{\isocan}{\xrightarrow{\hspace{1.85pt}\sim \hspace{1.85pt}}}
\numberwithin{equation}{section}
\author{Giuseppe Ancona}
\address{Duisburg-Essen University\\
Fakultät Mathematik 
Universität Duisburg-Essen,  Campus Essen \\
9, Thea-Leymann-Straße  \\
45127 Essen (Allemagne)}
\email{monsieur.beppe@gmail.com}
\urladdr{https://www.uni-due.de/\textasciitilde hm0168/}
\begin{document}

\title{Décomposition de motifs abéliens}

\date{}

\begin{abstract}
Soit $A$ une variété abélienne et fixons une cohomologie de Weil à coefficients dans un corps $F$. Soient $H^1(A,F)$ le premier groupe de cohomologie de $A$ et  $\Lef(A) \subset \GL(H^1(A,F))$ son groupe de Lefschetz; c'est-à-dire le sous-groupe de $\GL(H^1(A,F))$ des applications linéaires qui commutent aux endomorphismes de $A$ et qui respectent l'accouplement induit par une polarisation. Nous construisons explicitement une $\Q$-algèbre de correspondances $B_{i,r}$ telle que  l'application classe de cycle  induise un isomorphisme \[\cl_{|_{B_{i,r}}}: B_{i,r} \otimes_{\Q} F \isocan \End_{\Lef(A)}(H^i(A^r,F)).\] 
Nous donnons aussi des versions relatives de ce résultat. Nous en déduisons en particulier le fait suivant. Soit $S=S_K(G,\mathcal{X})$ une variété de Shimura de type PEL. Alors le foncteur \textit{construction canonique} ${\mu: \Rep (G) \rightarrow \VHS(S(\C))}$ se relève en un foncteur  ${\tilde{\mu}: \Rep (G) \rightarrow \CHM(S)_{\Q}}$, où $\CHM(S)_{\Q}$ est la catégorie des  motifs de Chow relatifs.  
 \end{abstract}
 
 \begin{altabstract}
Let $A$ be an abelian variety and let us fix a   Weil cohomology with coefficients in $F$. Let $H^1(A,F)$ be the first cohomology group of $A$ and $\Lef(A) \subset \GL(H^1(A,F))$ be its Lefschetz group, i.e. the sub-group of $\GL(H^1(A,F))$ of linear applications  commuting with endomorphisms of $A$ and respecting the pairing induced by a polarization. We give an explicit presentation of a $\Q$-algebra of correspondences $B_{i,r}$ such that  the cycle  class  map induces an isomorphism \[\cl_{|_{B_{i,r}}}: B_{i,r} \otimes_{\Q} F \isocan \End_{\Lef(A)}(H^i(A^r,F)).\]
We also give relative versions of this result. We deduce in particular the following fact. Let $S=S_K(G,\mathcal{X})$ be a Shimura variety of PEL type.  Then  the functor \textit{canonical construction} ${\mu: \Rep (G) \rightarrow \VHS(S(\C))}$ lifts  to a functor ${\tilde{\mu}: \Rep (G) \rightarrow \CHM(S)_{\Q}}$, where $\CHM(S)_{\Q}$  is the category of relative Chow motives.
 \end{altabstract}

\maketitle

\tableofcontents

\section{Introduction}

Soit $A$ une variété abélienne sur un corps $k$. Considérons une cohomologie de Weil à coefficients dans un corps $F$, par exemple la  cohomologie de Betti ($F=\Q$, $k=\C$) ou la cohomologie $\ell$-adique ($F=\Q_{\ell}$). Soient $H^1(A,F)$ le premier groupe de cohomologie de $A$ et  $\Lef(A) \subset \GL(H^1(A,F))$ son groupe de Lefschetz; c'est-à-dire le sous-groupe de $\GL(H^1(A,F))$ des applications linéaires qui commutent aux endomorphismes de $A$ et qui respectent l'accouplement induit par une polarisation (voir la Définition \ref{gpe lef}). 

Dans  \cite{Milne},  Milne montre que, pour tous $i$ et $r$, tous les éléments de l'algèbre \[\End_{\Lef(A)}(H^i(A^r,F))=\End_{\Lef(A)}(\wedge^i (H^1(A,F)^{\oplus r}))\] sont algébriques, c'est-à-dire qu'ils proviennent de correspondances algébriques via l'application classe de cycle. Mieux: il s'agit exactement des images des combinaisons linéaires d'intersections de diviseurs.\footnote{Ceci généralisait les résultats de Tankeev \cite{Tan}, Ribet \cite{Ribet}, Murty \cite{Mur}, Hazama \cite{Haz}, Ichikawa \cite{Ichi} et Zarhin \cite{Zar}.} Ainsi, l'ensemble des correspondances qui interviennent est décrit comme espace vectoriel.

\

Dans ce travail nous construisons explicitement une $\Q$-algèbre de correspondances $B_{i,r}$ telle que  l'application classe de cycle  induise un isomorphisme \[\cl_{|_{B_{i,r}}}: B_{i,r} \otimes_{\Q} F \isocan \End_{\Lef(A)}(H^i(A^r,F)).\] La surjectivité de cet isomorphisme redonne le résultat d'algébricité de Milne. L'injectivité permet d'avoir des informations non seulement sur le motif homologique de $A^r$ mais aussi sur son motif de Chow. 
\begin{thm}\label{thm intro}
 Toute décomposition cohomologique de $H^i(A^r,F)$ en somme de \linebreak sous-$\Lef(A)$-représentations se relève canoniquement en une décomposition dans la catégorie $\CHM(k)_F$ des motifs de Chow sur $k$ à coefficients dans $F$.
 \end{thm}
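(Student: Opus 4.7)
The plan is to reformulate the statement as a problem of lifting idempotents and then apply the main isomorphism $\cl_{|B_{i,r}}$ directly.

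\emph{Reduction to idempotents.} A decomposition $H^i(A^r,F)=\bigoplus_j V_j$ into sub-$\Lef(A)$-representations is equivalent to giving a family of mutually orthogonal idempotents $(e_j)$ in $\End_{\Lef(A)}(H^i(A^r,F))$ summing to the identity, with $V_j=\Im(e_j)$. Analogously, a decomposition in $\CHM(k)_F$ of the Chow motive cutting out $H^i(A^r,F)$ amounts to the same data in the corresponding Chow endomorphism algebra. To make sense of the latter I would invoke Deninger--Murre's algebraic K\"unneth decomposition for abelian varieties, which produces a direct summand $h^i(A^r)$ of $h(A^r)$ in $\CHM(k)_{\Q}$ whose realization is $H^i(A^r,F)$; by construction of $B_{i,r}$, this algebra embeds into $\End_{\CHM(k)_{\Q}}(h^i(A^r))$, so that $B_{i,r}\otimes_{\Q}F$ sits canonically inside $\End_{\CHM(k)_F}(h^i(A^r))$.

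\emph{Lift of the idempotents.} The main isomorphism
\[\cl_{|B_{i,r}}: B_{i,r}\otimes_{\Q}F \isocan \End_{\Lef(A)}(H^i(A^r,F))\]
is an isomorphism of $F$-algebras. The idempotents $e_j$ therefore admit unique preimages $\tilde e_j\in B_{i,r}\otimes_{\Q}F$, and because $\cl$ preserves the algebra structure, these preimages are themselves mutually orthogonal idempotents summing to $\Id$. Viewed as Chow correspondences they cut out a decomposition $h^i(A^r)=\bigoplus_j \Im(\tilde e_j)$ in $\CHM(k)_F$ whose image under realization is the original cohomological decomposition. Canonicity of the lift is immediate from the injectivity of $\cl$ on $B_{i,r}$.

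\emph{Where the real work lies.} The usual obstruction to lifting idempotents from cohomology to Chow is the kernel of the cycle class map: one lifts first to an element whose square equals itself only modulo homological equivalence, and the passage to a genuine idempotent then requires a separate nilpotent-killing argument. Here that obstruction disappears entirely because $\cl_{|B_{i,r}}$ is an isomorphism of algebras on the nose. All the substance of the statement is therefore concentrated in the explicit construction and in the main isomorphism announced at the top of the excerpt; the present theorem is a formal corollary.
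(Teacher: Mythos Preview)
Your proposal is correct and matches the paper's own argument essentially line for line: the paper also observes that a decomposition of $H^i(A^r,F)$ amounts to a list of orthogonal projectors in $\End_{\Lef(A)}(H^i(A^r,F))$, takes their unique preimages in $B_{i,r}\otimes_{\Q}F$ via the algebra isomorphism $\cl_{|B_{i,r}}$, and reads canonicity off injectivity. Your additional remarks (invoking the Deninger--Murre projectors to name the motive $\mathfrak{h}^i(A^r)$, and explaining why the usual idempotent-lifting obstruction vanishes) are accurate elaborations but do not depart from the paper's approach.
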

  En effet, une telle décomposition de $H^i(A^r,F)$ correspond à une liste $p_1,\ldots,p_j$ de projecteurs orthogonaux de l'algèbre $\End_{\Lef(A)}(H^i(A^r,F))$ et il suffit alors de prendre leurs préimages $\pi_1,\ldots,\pi_j$ dans $B_{i,r}\otimes_{\Q} F$. Le mot \og canoniquement\fg{} dans l'énoncé du théorème ci-dessus (et dans ceux qui suivent) est à entendre au sens suivant: des telles correspondances $\pi_1,\ldots,\pi_j$ non seulement existent, mais elles sont uniques à l'intérieur d'une algèbre explicite.
  
  Le Théorème \ref{thm intro} est un raffinement du résultat de Künnemann \cite{Ku2} qui démontre que les décompositions de Lefschetz se relèvent aux motifs de Chow. 
  
  À titre d'exemple, voici un cas particulier du Théorème \ref{thm intro}.
  
 \begin{cor} Soit $A$ une variété abélienne complexe de dimension première et telle que la $\Q$-algèbre des endomorphismes $\End(A)\otimes_{\Z} \Q$ soit un corps quadratique imaginaire. Alors, pour tous $i$ et $r$, toute décomposition de $H^i(A^r,\Q)$ en somme de sous-$\Q$-structures de Hodge se relève canoniquement en une décomposition dans la catégorie $\CHM(\C)_{\Q}$, des motifs de Chow sur $\C$ à coefficients dans $\Q$.
 \end{cor}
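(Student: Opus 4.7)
\emph{Plan de la preuve.} L'id\'ee est de ramener le corollaire au Th\'eor\`eme \ref{thm intro} en identifiant les sous-structures de Hodge de $H^i(A^r,\Q)$ aux sous-$\Lef(A)$-repr\'esentations. Une fois cette identification acquise, toute d\'ecomposition en sous-structures de Hodge donne lieu \`a un syst\`eme orthogonal de projecteurs dans $\End_{\Lef(A)}(H^i(A^r,\Q))$; leurs pr\'eimages canoniques dans l'alg\`ebre $B_{i,r}$ fournie par le Th\'eor\`eme \ref{thm intro} (appliqu\'e avec $F = \Q$) produisent alors la d\'ecomposition voulue dans $\CHM(\C)_{\Q}$.

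Par d\'efinition du groupe de Mumford-Tate $\MT(A) \subseteq \GL(H^1(A,\Q))$, les sous-structures de Hodge d'une construction tensorielle arbitraire sur $H^1(A,\Q)$ co\"incident exactement avec les sous-espaces $\MT(A)$-stables; ceci s'applique en particulier \`a $H^i(A^r,\Q) = \wedge^i (H^1(A,\Q)^{\oplus r})$. Comme $\MT(A)$ commute avec $\End(A)\otimes\Q$ et pr\'eserve la forme de Riemann d'une polarisation, on a l'inclusion $\MT(A) \subseteq \Lef(A)$; toute sous-$\Lef(A)$-repr\'esentation est donc automatiquement une sous-structure de Hodge. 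L'implication r\'eciproque, la seule dont nous ayons besoin, \'equivaut \`a l'\'egalit\'e $\MT(A) = \Lef(A)$.

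Cette \'egalit\'e est l'unique \'etape non formelle de l'argument, et c'est l\`a qu'interviennent les hypoth\`eses sp\'ecifiques sur $A$. Sous ces hypoth\`eses $A$ est simple, de type d'Albert IV, $H^1(A,\Q)$ est un $K$-espace vectoriel de dimension $p$, et $\Lef(A)$ s'identifie essentiellement au groupe des similitudes unitaires de la forme hermitienne induite par une polarisation. L'\'egalit\'e $\MT(A) = \Lef(A)$ dans ce cadre est un r\'esultat classique, cons\'equence des travaux de Tankeev \cite{Tan}, Ribet \cite{Ribet} et Hazama \cite{Haz}; la primalit\'e de $p$ y joue un r\^ole d\'ecisif, car elle exclut les cycles de Hodge exotiques de type Weil qui provoqueraient autrement une inclusion stricte $\MT(A) \subsetneq \Lef(A)$. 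C'est cette \'egalit\'e classique qui constitue la seule v\'eritable difficult\'e au-del\`a du Th\'eor\`eme \ref{thm intro}, toutes les autres \'etapes \'etant purement formelles.
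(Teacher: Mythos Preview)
Your approach is essentially the same as the paper's: the paper deduces the corollary from Th\'eor\`eme~\ref{thm intro} by invoking the coincidence of the Mumford--Tate group with the Lefschetz group (times homotheties), citing \cite{Tan, Ribet}.

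One technical correction: with the paper's D\'efinition~\ref{gpe lef}, $\Lef(A)$ consists of those $f$ with $f^*f=\Id$, whereas $\MT(A)$ contains the homotheties $\G_m$, which satisfy $f^*f=t^2\cdot\Id$. So the inclusion you write, $\MT(A)\subseteq\Lef(A)$, is false as stated; the correct inclusion is $\MT(A)\subseteq L(A)=\Lef(A)\cdot\G_m$ (cf.\ Remarque~\ref{notaz}(ii)), and the paper's own one-line proof explicitly says \og fois les homoth\'eties\fg. This does not damage your argument, since homotheties act by scalars on each $H^i(A^r,\Q)$ and hence do not affect which subspaces are stable; but you should phrase the inclusion and the equality accordingly. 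Similarly, your description of $\Lef(A)$ as the group of unitary \emph{similitudes} should read unitary group (the similitude group is $L(A)$).
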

En effet, sous ces hypothèses, le groupe de Mumford-Tate de $A$ coïncide avec le groupe de Lefschetz (fois les homothéties),  \cite{Tan, Ribet}.

\
 
  Les résultats qui précèdent ont des analogues relatifs. Un exemple en est le résultat suivant, qui était la motivation originale de ce travail  (voir aussi le Théorème \ref{thm rep}).
 \begin{thm}\label{automorphe}
 Soit $S=S_K(G,\mathcal{X})$ une variété de Shimura de type PEL. Soient $S_0$ son modèle canonique sur le corps réflexe et $A_0$ le schéma abélien universel sur celui-ci. Soit $f: A_0^r \rightarrow S_0$ le $r$-ième produit fibré  de $A_0$ au-dessus de $S_0$. Alors toute décomposition de  $R^i f_*\Q_{A_0^r}$ induite par le foncteur \og construction canonique\fg{} ${\mu: \Rep (G) \longrightarrow \VHS(S_0(\C))}$ (\cite[1.18]{Pi}, voir aussi \cite[\S 2]{BW}) se relève canoniquement en une décomposition dans la catégorie $\CHM(S_0)_{\Q}$ des motifs de Chow relatifs. 
\end{thm}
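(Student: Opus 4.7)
L'id\'ee est de d\'eduire ce th\'eor\`eme de la version relative du Th\'eor\`eme \ref{thm intro} (\`a savoir le Th\'eor\`eme \ref{thm rep} mentionn\'e plus haut), appliqu\'ee au sch\'ema ab\'elien universel $f: A_0^r \to S_0$. Rappelons tout d'abord qu'une donn\'ee PEL d\'efinit $G$ comme le sous-groupe de $\GL(V)$ qui commute \`a l'action d'une $\Q$-alg\`ebre semi-simple $B$ et qui pr\'eserve, \`a une similitude pr\`es, une forme symplectique $\psi$ sur $V$. Le foncteur $\mu$ envoie une repr\'esentation $W$ de $G$ sur un VHS dont le syst\`eme local sous-jacent, en un point $s \in S_0(\C)$, s'identifie \`a $W$ munie de la structure de Hodge d\'etermin\'ee par $h_s \in \mathcal{X}$. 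En particulier, $R^i f_* \Q_{A_0^r} \simeq \mu(\wedge^i V^{\oplus r})$ (\`a twist de Tate pr\`es), et toute d\'ecomposition induite par $\mu$ provient d'un syst\`eme de projecteurs orthogonaux $G$-\'equivariants dans $\End(\wedge^i V^{\oplus r})$.

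Ensuite, je comparerais $G$ au groupe de Lefschetz de la fibre g\'en\'erique $A_\eta$ de $A_0 \to S_0$. Par d\'efinition d'une vari\'et\'e de Shimura de type PEL, l'alg\`ebre $\End(A_\eta) \otimes_{\Z} \Q$ co\"incide avec $B$; par cons\'equent $G$ et $\Lef(A_\eta)$ ne diff\`erent que par les homoth\'eties, et un projecteur est $G$-invariant si et seulement s'il est $\Lef(A_\eta)$-invariant. Les projecteurs \`a relever appartiennent donc \`a l'alg\`ebre $\End_{\Lef(A_\eta)}(H^i(A_\eta^r, F))$, pour laquelle on dispose d\'ej\`a du r\'esultat absolu.

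La version relative du th\'eor\`eme principal fournit alors une $\Q$-alg\`ebre explicite de correspondances relatives $B_{i,r}(S_0) \subset \CH^*(A_0^r \times_{S_0} A_0^r)_{\Q}$ telle que la restriction \`a la fibre g\'en\'erique induise un isomorphisme $B_{i,r}(S_0) \otimes_{\Q} F \isocan \End_{\Lef(A_\eta)}(H^i(A_\eta^r, F))$. Les projecteurs $G$-\'equivariants obtenus ci-dessus se rel\`event ainsi de mani\`ere unique en des idempotents orthogonaux de $B_{i,r}(S_0)$, d'o\`u la d\'ecomposition cherch\'ee dans $\CHM(S_0)_{\Q}$.

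L'obstacle principal r\'eside dans la construction m\^eme de la version relative : il faut exhiber les correspondances comme combinaisons lin\'eaires d'intersections de diviseurs relatifs sur les puissances fibr\'ees de $A_0$ sur $S_0$ (issus de la polarisation relative, des graphes d'endomorphismes de $B$, et des correspondances de permutation) et en contr\^oler la restriction \`a la fibre g\'en\'erique. L'existence de l'action relative de $B$ sur $A_0$, garantie par la structure PEL, est ici essentielle : elle assure que les correspondances n\'ecessaires sont d\'efinies globalement sur $S_0$, alors m\^eme que certaines fibres sp\'eciales peuvent admettre davantage d'endomorphismes et poss\'eder ainsi un groupe de Lefschetz strictement plus petit.
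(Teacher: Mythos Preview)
Your approach is essentially the same as the paper's: identify the subgroup $G_1\subset G$ (kernel of the multiplier) with the Lefschetz group $\Lef(A_0)$, observe that $G$-equivariant projectors coincide with $\Lef(A_0)$-equivariant ones since $G$ is generated by $G_1$ and the homotheties, and then apply the main isomorphism $B_{i,r}\otimes_{\Q} F\isocan\End_{\Lef(A_0)}(H^i(A_0^r))$ (Th\'eor\`eme~\ref{cor princ}) to lift the projectors.

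Two clarifications are worth making. First, the ``obstacle principal'' you describe in your last paragraph is not an obstacle at all: the entire machinery of the paper (the algebras $\mathcal{B}_{n,F}$ and $B_{i,r}$, the Th\'eor\`emes~\ref{thm cle} and~\ref{cor princ}) is developed from the outset in the relative setting $\CHM(S)_F$ for an arbitrary abelian scheme $A/S$. There is no passage from absolute to relative to perform; one simply specialises $S=S_0$ and $A=A_0$. In particular, you do not need to invoke the generic fibre $A_\eta$ or worry about restriction: the relative Weil cohomology already reads off any chosen fibre, and $\End(A_0)\otimes\Q$ (which equals $\End(A_\eta)\otimes\Q$ by rigidity, since $S_0$ is connected) is what enters the definition of $\Lef(A_0)$. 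Second, the result you want to invoke is the relative form of Th\'eor\`eme~\ref{thm intro}, namely Th\'eor\`eme~\ref{cor princ}; the Th\'eor\`eme~\ref{thm rep} you cite is a different (though closely related) statement about lifting the entire functor $\mu$ to $\CHM(S_0)_F$, not just the decompositions of $R^i f_*\Q_{A_0^r}$.
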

Cet énoncé contribue au programme de Wildeshaus \cite{Wild3} qui vise à attacher un motif à une forme modulaire, en généralisant le résultat de Scholl \cite{SchMod} pour les formes modulaires classiques (voir aussi la Section \ref{motif form}). 

\begin{rem}
Dans un travail récent \cite{Moo} Moonen a obtenu des décompositions motiviques assez proches de celles-ci par des méthodes différentes des nôtres.
\end{rem}

\begin{npar}{Stratégies de preuve}
Pour démontrer que l'application $\cl_{|_{B_{i,r}}}$ ci-dessus est un isomorphisme on remarque d'abord que l'énoncé est stable par extension des scalaires. Soit $\overline{F}$ une clôture algébrique de $F$. Le groupe $\Lef(A)\times_F \overline{F}$ devient un produit de groupes linéaires, orthogonaux et symplectiques \cite[Summary p.655]{Milne}. On dispose alors d'une liste explicite de générateurs et relations de l'algèbre $\End_{\Lef(A)}(H^i(A^r,F))$, grâce à la théorie classique des invariants \cite{Weyl}. La surjectivité (respectivement l'injectivité) est obtenue en relevant chaque générateur (respectivement chaque relation). 

L'algèbre $B_{i,r}$ sera, par définition, l'algèbre engendrée par une certaine famille de correspondances. Cette famille sera choisie pour relever les générateurs, ainsi la partie plus délicate est vérifier l'injectivité. Pour une preuve de l'injectivité qui suit le schéma précédent, voir \cite{Anc}. Nous donnons ici une preuve qui demande moins de vérifications (bien qu'elle soit moins élémentaire, nous commentons ce choix dans la Remarque \ref{alternative}). Elle s'appuie sur les résultats récents de O'Sullivan. Grâce à \cite{O'S} on dispose d'un critère général: si une algèbre de correspondances est engendrée par des cycles d'un certain type\footnote{Des tels cycles sont appelés par O'Sullivan \og symmetrically distinguished\fg.} alors l'application classe de cycle, restreinte à cette algèbre, est injective. Il ne nous restera donc qu'à vérifier que les cycles  qui engendrent  $B_{i,r}$ sont bien de ce type. 
\end{npar}

\begin{npar}{Organisation du texte}
Les sections 2, 3 et 4 sont essentiellement de rappel. La section 2 donne les résultats classiques sur les motifs des schémas abéliens, d'après Deninger, Murre, Künnemann et Kings. La section 3 est autour des cohomologies de Weil relatives.  Nous vérifions au passage que la décomposition construite dans \cite{DM} est effectivement une décomposition de Chow-Künneth pour toutes les cohomologies de Weil (dans \cite{DM} ceci avait été vérifié pour la cohomologie $\ell$-adique). Dans la section 4 nous définissons le groupe $\Lef(A)$ et en donnons des propriétés de base.

Les sections 5, 6, 7 et 8 démontrent que l'application $\cl_{|_{B_{i,r}}}$ ci-dessus est un isomorphisme. La section 5 donne la définition d'une algèbre de correspondances convenable $\mathcal{B}_{n,F}$.  La section 6 énonce le théorème principal et démontre la surjectivité de l'application. L'injectivité est démontrée dans la section 7, après un rappel sur le théorème de scindage de O'Sullivan \cite{O'S}. L'algèbre $B_{i,r}\otimes_{\Q} F$ sera construite à partir de $\mathcal{B}_{n,F}$ au \ref{Bir} et ses propriétés principales se déduiront formellement de celles de $\mathcal{B}_{n,F}$. La section 8 termine en étudiant le cas des variété de Shimura et en démontrant  le Théorème \ref{automorphe}.

\end{npar}

\begin{npar}{Remerciements}
 Ce travail a une forte intersection avec ma thèse de doctorat. Je tiens à remercier mon directeur Jörg Wildeshaus pour ses conseils  précieux. Je dois beaucoup aux rapporteurs de thèse Ben Moonen et Rutger Noot pour les échanges enrichissants que l'on a eu et pour tous leurs commentaires. Je remercie les autres membres de jury Christian Ausoni, Spencer Bloch, Jean-Benoît Bost, Michael Harris et Benoît Stroh pour leurs questions et remarques stimulantes. Merci à Olivier Benoist, Yohan Brunebarbe, Ishai Dan-Cohen, Javier Fres\'an, Lie Fu, Florian Ivorra et Marco Maculan pour la relecture des premières versions du texte. J'exprime ma gratitude au referee, ses remarques ont permis d'améliorer le texte. 
\end{npar}

 \begin{npar}{Notations}
 Dans ce qui suit $k$ est un corps de caractéristique quelconque qui n'est pas forcément algébriquement clos. On fixe un schéma de base $S$  qui est connexe quasi-projectif et lisse sur $k$. On notera $\Var/S$ la catégorie des schémas connexes, projectifs et lisses sur $S$; ses objets seront appelés variétés. On notera $\Ab/S$ la catégorie des schémas abéliens, sous-catégorie non-pleine de  $\Var/S$.  Les objets seront les $S$-schémas abéliens et les morphismes seront les morphismes comme $S$-schémas en groupe.
 
 On désignera par $A$ un schéma abélien sur $S$, de dimension relative $g$. On notera $\End(A)=\End_{\Ab/S}(A)$ son anneau des endomorphismes et $\End(A)^{\op}$ l'anneau opposé. Pour tout entier $n$ l'endomorphisme de $A$ de multiplication par $n$ sera noté $n_A.$ Pour tout entier $r\geq 1$, on notera $A^r$ le $r$-ième produit fibré de $A$ au-dessus de $S$. 
 
 Le corps $F$ désignera toujours un corps de caractéristique nulle  qui sera le corps des coefficients (des groupes de Chow ou de la cohomologie de Weil qu'on considérera). Pour toute variété  $X\in \Var/S$, $\CH^i(X)_F:=\CH^i(X)\otimes_{\Z}F$ sera le $F$-module des cycles algébriques de $X$ de codimension $i$ modulo équivalence rationnelle.
 
 Le dual d'un objet $Y$ sera noté $Y^{\vee}$. L'endomorphisme identité d'un objet $Y$ sera noté $\Id_Y$ ou simplement $\Id$. Quand $B$ est un groupe ou un anneau agissant sur un $R$-module $V$, $\End_B(V)$ désignera l'algèbre des endomorphismes $R$-linéaires de $V$ qui commutent à l'action de $B$.
 \end{npar}

\section{Motif d'un schéma abélien}\label{Motif d'un schéma abélien}
Dans cette section nous rappelons les résultats classiques sur le motif d'un schéma abélien.
\begin{npar}{Motifs de Chow relatifs}\label{Motifs de Chow relatifs}
Soit $\CHM(S)_F$ la catégorie  $F$-linéaire, tensorielle, symétrique, rigide et pseudo-abélienne des motifs de Chow relatifs (à coefficients dans $F$), munie du foncteur contravariant, dit \textit{motif}  
\[M : \Var/S \rightarrow \CHM(S)_F,\]
telle que, si $X\in \Var /S $ est une variété de dimension relative $d$ et $Y\in \Var /S$ est une autre variété, on ait
\[\Hom_{\CHM(S)_F}(M(X),M(Y)):=\CH^{d}(X\times_S Y)_F.\]
Pour une introduction à cette catégorie, voir  \cite{DM}, \cite{Ku1} et \cite{CoHa}.

Le motif $M(S)$ est appelé \textit{motif unité} et noté $\mathds{1}.$ Le motif $M(\mathbb{P}_S^1)$ s'écrit canoniquement comme somme directe de $\mathds{1}$ et d'un autre motif, qui est  appelé \textit{motif de Lefschetz} et qui sera noté  $\mathbb{L}$  ou $\mathds{1}(-1)$. Pour tout entier positif $m$ tensoriser par $\mathbb{L}^{\otimes m}$  ou par son dual $(\mathbb{L}^{\vee})^{\otimes m}$ est une auto-équivalence de la catégorie $\CHM(S)_F$ appelée \textit{twist de Tate} et
 notée respectivement $(-m)$ et $(m)$. 
\end{npar}

\begin{npar}{Puissances symétriques}\label{puissances} (Voir aussi \cite[3.3.1]{Andmot}). La structure tensorielle de $\CHM(S)_F$ est construite de sorte qu'on ait tautologiquement la formule de Künneth
\[M(X \times_S Y)=M(X)\otimes M(Y).\]
La structure symétrique du produit tensoriel est par définition donnée par
\[M(\sigma_{XY}): M(X)\otimes M(Y) \isocan M(Y)\otimes M(X),\]
où 
\[\sigma_{XY}: Y \times_S X \isocan X \times_S Y\]
est l'isomorphisme d'échange. Cette condition détermine la structure symétrique pour toute la catégorie $\CHM(S)_F$ puisque tout motif est facteur direct, après twist de Tate, d'un motif de la forme $M(X)$. 

Ainsi, pour tout motif $Z$, on dispose de l'action du groupe des permutations $\mathcal{S}_n$ sur $Z^{\otimes n}$. Par définition $\Sym^n Z$ est le facteur direct de $Z^{\otimes n}$, image du projecteur
\[P_{\Sym^n Z}= \sum_{\sigma \in \mathcal{S}_n} \sigma.\] 
\end{npar}

\begin{thm}[{\cite[Thm. 3.1]{DM}}]\label{DM}
Dans $\CHM(S)_F$ il y a une décomposition 
\[M(A)=\bigoplus_{i=0}^{i=2g}\mathfrak{h}^i(A).\]
 naturelle en $A \in \Ab/S$, et caractérisée par l'égalité (valable pour tout entier $n$) 
 \[M(n_A)= \bigoplus_{i=0}^{i=2g} n^i \Id_{\mathfrak{h}^i(A)}.\]
 
\end{thm}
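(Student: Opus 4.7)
The aim is to produce, inside the ring $\End_{\CHM(S)_F}(M(A)) = \CH^g(A \times_S A)_F$, a family of pairwise orthogonal idempotents $\pi_0, \ldots, \pi_{2g}$ whose sum is $\Id_{M(A)}$ and which satisfy $M(n_A) = \sum_{i=0}^{2g} n^i \pi_i$ for every $n \in \Z$; the summands are then defined as $\mathfrak{h}^i(A) := \mathrm{im}(\pi_i)$. Uniqueness (and then naturality, see below) is immediate from the characterizing formula: specializing at $n = 0,1,\ldots,2g$ yields a Vandermonde linear system expressing each $\pi_i$ as a fixed $\Q$-linear combination of the correspondences $M(n_A)$.

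Existence of the $\pi_i$ is equivalent to the polynomial identity
\[\prod_{i=0}^{2g} \bigl(M(n_A) - n^i\, \Id_{M(A)}\bigr) = 0 \quad \text{in } \End_{\CHM(S)_F}(M(A))\]
for one (hence every) integer $n \geq 2$: once this is known, $\pi_i$ is defined by Lagrange interpolation, and the compatibility $M(m_A) = \sum_i m^i \pi_i$ for every other $m$ follows because the $M(n_A)$ pairwise commute (since $n_A \circ m_A = (nm)_A = m_A \circ n_A$) and the same polynomial identity holds with $m$ in place of $n$. To establish the identity I would follow the Beauville strategy via the Fourier--Mukai transform $\mathcal{F}: \CH^*(A)_F \to \CH^*(A^\vee)_F$ attached to the Poincar\'e bundle on $A \times_S A^\vee$. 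The two key formal properties of $\mathcal{F}$, namely the inversion formula $\mathcal{F} \circ \mathcal{F} = (-1)^g [-1]^*$ and the intertwining of $\mathcal{F}$ with $n_A^*$ on one side and a power of $n$ times a pullback on the other, together diagonalize $n_A^*$ on $\CH^*(A)_F$ with eigenvalues among the $n^i$. Applying this to the abelian scheme $A \times_S A$ (which is self-dual up to a polarization) transports the identity to $\CH^g(A \times_S A)_F = \End_{\CHM(S)_F}(M(A))$.

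Naturality in $A$ then comes for free: given $f: A \to B$ in $\Ab/S$, the relation $f \circ n_A = n_B \circ f$ gives $M(f) \circ M(n_A) = M(n_B) \circ M(f)$, so the explicit Vandermonde formulas for $\pi_i^{(A)}$ and $\pi_i^{(B)}$ in terms of the $M(n_\bullet)$ yield at once $M(f) \circ \pi_i^{(A)} = \pi_i^{(B)} \circ M(f)$. The principal obstacle is the Fourier step: one has to verify that Beauville's eigenvalue analysis extends to the relative setting over a general base $S$, and carefully track the dimension shift by $g$ that appears when one identifies the endomorphism algebra of $M(A)$ with $\CH^g(A \times_S A)_F$ rather than with an absolute Chow group of $A$. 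Once this is in place the rest is purely formal manipulation inside the commutative subalgebra generated by the $M(n_A)$.
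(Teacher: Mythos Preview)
The paper does not give its own proof of this statement: Theorem~\ref{DM} is stated with attribution to Deninger--Murre \cite[Thm.~3.1]{DM} and used as a black box. Your proposal is a faithful outline of precisely that argument. Deninger--Murre extend Beauville's Fourier transform to abelian schemes over an arbitrary base, obtain the eigenspace decomposition of $\CH^g(A\times_S A)_F$ under $(n_A\times\Id)^*$ with eigenvalues $n^0,\ldots,n^{2g}$, and read off the projectors $\pi_i$ as the components of the diagonal class; orthogonality and the formula $M(n_A)=\sum n^i\pi_i$ then follow from the multiplicativity $\Gamma_{n_A}\circ\Gamma_{m_A}=\Gamma_{(nm)_A}$. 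Your Vandermonde argument for uniqueness and the deduction of naturality from $f\circ n_A=n_B\circ f$ are exactly the standard ones.

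One small point worth tightening: the step ``compatibility $M(m_A)=\sum m^i\pi_i$ for every other $m$ follows because the $M(n_A)$ pairwise commute and the same polynomial identity holds with $m$'' is not quite self-contained as written. Commutativity plus the polynomial identity for $m$ only tells you that $M(m_A)$ acts on $\mathrm{im}(\pi_i)$ with eigenvalues among $\{m^j\}_j$, not that the eigenvalue is $m^i$. The clean fix is the one actually used in \cite{DM}: the Beauville--Fourier decomposition of $\CH^g(A\times_S A)_F$ is by construction simultaneous for all $n$, so the projectors are independent of $n$ from the outset. Alternatively, one can argue via $M((nm)_A)=M(n_A)M(m_A)$ and multiplicative independence of two chosen integers $n,m\geq 2$. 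Either way the gap is easily closed, and the overall strategy is correct.
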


\begin{thm}[{\cite[Thm. 3.1.1 et 3.3.1]{Ku1}, \cite[p. 85]{Ku2}}]\label{Ku}
\begin{enumerate}[(i)]

\item Pour tout ${0\leq i \leq 2g},$ il existe un isomorphisme canonique

\[\Sym ^i \mathfrak{h}^1(A) \isocan \mathfrak{h}^i(A).\]

\item \label{dimKu} Pour $i> 2g,$ le motif $ \Sym ^i \mathfrak{h}^1(A)$ est nul. Pour $i=0$ on a $\mathfrak{h}^0(A)=\mathds{1},$  et pour $i=2g$ on a $\mathfrak{h}^{2g}(A)=\mathbb{L}^{\otimes g}.$

\item L'isomorphisme  $\Sym ^* \mathfrak{h}^1(A) \isocan M(A)$ respecte les structures multiplicatives.\footnote{Rappelons que pour $M(A)$ il s'agit du morphisme induit par l'inclusion de la diagonale \[ M(A)\otimes M(A)=M(A^2) \longrightarrow M(A).\]}

\item On a la dualité de Poincaré \[\mathfrak{h}^{2g-i}(A)^{\vee}=\mathfrak{h}^i(A)(g)\] et des isomorphismes  \og de Lefschetz \fg{} non canoniques
\[\mathfrak{h}^i(A)\isocan \mathfrak{h}^{2g-i}(A)(g-i).\]

\end{enumerate}
\end{thm}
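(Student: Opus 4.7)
The plan is to leverage the naturality of the DM decomposition of Theorem \ref{DM} together with the group law of $A$ in order to endow $M(A)$ with the structure of a graded commutative Hopf algebra in $\CHM(S)_F$, from which Theorem \ref{Ku} will follow. First, I would observe that for any morphism $f \colon A \to B$ of abelian schemes, the pullback $f^\ast \colon M(B) \to M(A)$ sends $\mathfrak{h}^i(B)$ to $\mathfrak{h}^i(A)$, since the $\mathfrak{h}^i$ are characterized intrinsically as the eigenspace on which $n_A^\ast$ acts by $n^i$, and $f$ commutes with multiplication by $n$. Applying this to the sum $\sigma \colon A \times_S A \to A$, to the unit $\varepsilon \colon S \to A$, and to the inverse $[-1]_A$ equips $M(A)$ with a graded cocommutative Hopf algebra structure; after Poincaré duality one obtains an equivalent graded commutative Hopf algebra structure.

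From this Hopf algebra structure, the canonical map in (i) arises as follows: the $i$-fold iterated Pontryagin product $\mathfrak{h}^1(A)^{\otimes i} \to \mathfrak{h}^i(A)$ factors through $\Sym^i \mathfrak{h}^1(A)$ because the group law is commutative. To prove this canonical map is an isomorphism, the strategy is a motivic version of the classical fact that $H^\ast(A, F)$ is freely generated as a graded commutative algebra by $H^1(A, F)$: one proves by induction on $i$ that $M(A)$ is generated in degree one (giving surjectivity) and that the only relations among the generators are those forced by graded commutativity (giving injectivity). Both points are proved by explicit cycle computations; the key tool is to refine the DM eigenspace characterization by considering, on $A^i$, the simultaneous eigenspaces of all the pullbacks $(n_1, \ldots, n_i)_{A^i}^\ast$ as the tuples $(n_1, \ldots, n_i)$ vary.

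Item (ii) then comes essentially for free from the isomorphism in (i), combined with the truncation of the DM decomposition at $2g$: this forces $\Sym^i \mathfrak{h}^1(A) = 0$ for $i > 2g$. The identifications $\mathfrak{h}^0(A) = \mathds{1}$ and $\mathfrak{h}^{2g}(A) = \mathbb{L}^{\otimes g}$ follow respectively by restricting to the unit section of $A$ and by the Poincaré duality of (iv) applied to $\mathfrak{h}^0$. Item (iii) is automatic from the construction, since the isomorphism is built precisely from the Pontryagin (co)product. For (iv), the duality $\mathfrak{h}^{2g-i}(A)^\vee = \mathfrak{h}^i(A)(g)$ comes from the general self-duality $M(A)^\vee = M(A)(g)$ for the smooth projective $A$, combined with the uniqueness in Theorem \ref{DM} applied to the transpose of $n_A^\ast$; the non-canonical Lefschetz isomorphisms then come from a choice of polarization, whose class $\eta \in \CH^1(A)_F$ acts on each $\mathfrak{h}^i(A) \cong \Sym^i \mathfrak{h}^1(A)$ by an operator whose invertibility is controlled by the explicit symmetric power description.

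The principal obstacle is the injectivity in (i): surjectivity follows from the generation of the Pontryagin algebra in degree one, but injectivity requires constructing explicit left inverses for $\Sym^i \mathfrak{h}^1(A) \to \mathfrak{h}^i(A)$ by producing suitable correspondences on $A^i$. This is the technical heart of Künnemann's argument, and it is precisely the step where the simultaneous $(n_1, \ldots, n_i)$-eigenspace refinement of the DM decomposition plays its decisive role.
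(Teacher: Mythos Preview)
The paper does not give its own proof of this statement: Theorem~\ref{Ku} is simply quoted from K\"unnemann \cite{Ku1,Ku2}, with no \texttt{proof} environment following it. So there is nothing in the present paper to compare your proposal against.

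That said, your sketch is broadly in the spirit of K\"unnemann's original argument, but with two inaccuracies worth flagging. First, a terminological slip: the map $\mathfrak{h}^1(A)^{\otimes i} \to \mathfrak{h}^i(A)$ you want is the one coming from the \emph{diagonal} $A \hookrightarrow A^i$ (the cup-product structure, as the footnote to item (iii) makes explicit), not the Pontryagin product; it factors through $\Sym^i$ because the cup product is graded-commutative, not because the group law is commutative. The group law enters on the other side, giving the \emph{co}multiplication, and its commutativity makes $M(A)$ \emph{co}commutative.

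Second, and more substantively, your proposed attack on injectivity --- building explicit left inverses via simultaneous $(n_1,\dots,n_i)$-eigenspace refinements on $A^i$ --- is not how K\"unnemann proceeds. His argument is more structural: having equipped $M(A)$ with a connected graded Hopf algebra structure, he observes that $\mathfrak{h}^1(A)$ lands in the primitives, and then invokes a categorical version of the Hopf/Milnor--Moore structure theorem (valid in any $\mathbb{Q}$-linear rigid tensor category) to conclude that a connected graded-commutative Hopf algebra is the symmetric algebra on its primitives. The finite-dimensionality constraint $\Sym^{>2g}\mathfrak{h}^1(A)=0$ then falls out of the truncation of the DM decomposition. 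Your eigenspace approach could perhaps be made to work, but it is not the route taken in \cite{Ku1}, and the ``explicit cycle computations'' you allude to would have to be spelled out from scratch rather than read off from K\"unnemann.
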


\begin{rem}
Par la règle des signes de Koszul, le morphisme $\sigma_{XY}$ de \ref{puissances} agit sur la cohomologie avec des signes (qui dépendent du degré). Ceci fait que l'isomorphisme du Théorème \ref{Ku}(i) se réalise dans l'isomorphisme classique d'espaces vectoriels $\wedge^i H^1(A) \isocan H^i(A)$ (comme on verra dans la Proposition \ref{coho abel}(i)). 

Dans l'article original, Künnemann note $ \Sym ^i \mathfrak{h}^1(A)$ plutôt par $ \wedge ^i \mathfrak{h}^1(A)$ \cite[2.6]{Ku1}. Nous préférons les puissances symétriques (et voir le foncteur de réalisation comme un foncteur vers les super-espaces vectoriel, voir la Définition \ref{def coho Weil}). Ceci suit la convention plus moderne utilisée dans \cite{Andmot, Kim, O'S} ainsi que dans la théorie des motifs mixtes.
\end{rem}

\begin{cor}\label{coro Ku}
Il y a des isomorphismes  \og de Poincaré-Lefschetz \fg{} non canoniques
\[\PL:\mathfrak{h}^1(A)\isocan \mathfrak{h}^{1}(A)^{\vee}(-1).\]
\end{cor}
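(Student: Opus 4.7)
The plan is to obtain $\PL$ directly from Theorem \ref{Ku}(iv), by combining, at $i=1$, the two statements it provides: the canonical Poincaré duality $\mathfrak{h}^{2g-i}(A)^{\vee} = \mathfrak{h}^{i}(A)(g)$ and the non-canonical Lefschetz isomorphism $\mathfrak{h}^{i}(A) \isocan \mathfrak{h}^{2g-i}(A)(g-i)$.

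Concretely, I would first rewrite Poincaré duality at $i=1$ in the form $\mathfrak{h}^{2g-1}(A) = \mathfrak{h}^{1}(A)^{\vee}(-g)$, by dualising both sides and applying a Tate twist. Then I would substitute this identity into the right-hand side of the Lefschetz isomorphism $\mathfrak{h}^{1}(A) \isocan \mathfrak{h}^{2g-1}(A)(g-1)$ and collect the Tate twists: the exponents add to $-g + (g-1) = -1$, producing the desired isomorphism
\[ \mathfrak{h}^{1}(A) \isocan \mathfrak{h}^{1}(A)^{\vee}(-1). \]

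There is essentially no obstacle here: the whole construction is just a bookkeeping of Tate twists on top of the non-trivial content of Theorem \ref{Ku}(iv). The non-canonicity of $\PL$ stems exclusively from that of the Lefschetz isomorphism, which itself reflects a choice of polarization of $A$; the Poincaré duality step is canonical and contributes nothing to the ambiguity.
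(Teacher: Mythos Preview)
Your proposal is correct and matches the paper's approach: the corollary is stated immediately after Theorem~\ref{Ku} with no separate proof, the intended argument being precisely the combination of Poincar\'e duality and the Lefschetz isomorphism from part~(iv) at $i=1$ that you spell out. Your bookkeeping of the Tate twists is accurate, and your remark on the source of the non-canonicity is also correct.
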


\begin{prop}[{\cite[Prop. 2.2.1]{Ki}}] \label{Ki}
Le foncteur $\mathfrak{h}^1$ induit un isomorphisme de $F$-algèbres
\[\End(A)^{\op}\otimes_{\Z}F \isocan \End_{\CHM(S)_F}(\mathfrak{h}^1(A)).\]
\end{prop}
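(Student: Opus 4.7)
Le plan est de construire d'abord l'application naturelle $f\mapsto \mathfrak{h}^1(f)$, puis d'en v\'erifier s\'epar\'ement l'injectivit\'e et la surjectivit\'e. \`A tout $f\in \End(A)$ on associe le morphisme $M(f):M(A)\to M(A)$ donn\'e par le graphe de $f$, la contravariance de $M$ expliquant le passage \`a l'alg\`ebre oppos\'ee. Comme $f$ est un morphisme de sch\'emas ab\'eliens, il commute \`a $n_A$ pour tout entier $n$, et il en est donc de m\^eme pour $M(f)$ et $M(n_A)$. La caract\'erisation des sous-motifs $\mathfrak{h}^i(A)$ comme sous-espaces propres de $M(n_A)$ (Th\'eor\`eme \ref{DM}) garantit alors que $M(f)$ pr\'eserve chaque facteur direct $\mathfrak{h}^i(A)$, et l'on d\'efinit $\mathfrak{h}^1(f)$ comme la restriction au facteur $i=1$. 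La fonctorialit\'e assure enfin que $f\mapsto \mathfrak{h}^1(f)$ est un homomorphisme de $F$-alg\`ebres.

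Pour l'injectivit\'e, je fixerais une cohomologie de Weil auxiliaire. Son foncteur de r\'ealisation envoie $\mathfrak{h}^1(A)$ sur $H^1(A,F)$, de sorte que la composition $\End(A)\otimes_{\Z}F\to \End_{\CHM(S)_F}(\mathfrak{h}^1(A))\to \End(H^1(A,F))$ co\"incide avec la repr\'esentation classique des endomorphismes sur le premier groupe de cohomologie. Lorsque $S=\Spec k$, la fid\'elit\'e de celle-ci est un r\'esultat classique de Weil: un endomorphisme agissant trivialement sur $H^1$ est num\'eriquement trivial, donc nul par positivit\'e de l'involution de Rosati. Pour $S$ quelconque, la rigidit\'e des morphismes de sch\'emas ab\'eliens donne l'injection $\End(A)\hookrightarrow \End(A_s)$ pour tout point $s\in S$, ce qui ram\`ene au cas ponctuel.

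La partie principale sera la surjectivit\'e. L'approche que je suivrais exploite la pr\'esentation $M(A)=\Sym^{*}\mathfrak{h}^1(A)$ du Th\'eor\`eme \ref{Ku}(iii), qui identifie $\End_{\CHM(S)_F}(\mathfrak{h}^1(A))$ \`a un facteur direct explicite de $\CH^g(A\times_S A)_F=\End_{\CHM(S)_F}(M(A))$, extrait au moyen des projecteurs de Chow-K\"unneth. Plus pr\'ecis\'ement, la d\'ecomposition de Beauville-Deninger-Murre de $\CH^{\bullet}(A\times_S A)_F$ en sous-espaces propres pour l'action conjointe de $[n]\times [m]$ raffine la d\'ecomposition de Chow-K\"unneth et identifie ce facteur \`a la composante de bi-poids $(1,1)$. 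Il resterait alors \`a \'etablir que cette composante est engendr\'ee, sur $F$, par les classes de graphes d'endomorphismes: via la transform\'ee de Fourier-Mukai pour les sch\'emas ab\'eliens, la composante $(1,1)$ s'identifie aux classes de $\CH^1(A\times_S A^{\vee})_F$ invariantes par translation, c'est-\`a-dire aux homomorphismes $A\to A^{\vee}$, ce qui redonne $\End(A)\otimes F$ apr\`es composition avec une polarisation. L'obstacle principal se concentre ainsi dans ce dernier pas, qui exige le contr\^ole fin des groupes de Chow d'un sch\'ema ab\'elien offert par le formalisme \`a la Beauville-Deninger-Murre-K\"unnemann.
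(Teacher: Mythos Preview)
Le papier ne d\'emontre pas cette proposition: elle est simplement cit\'ee de \cite[Prop.~2.2.1]{Ki} (Kings), qui s'appuie lui-m\^eme sur le formalisme de Deninger--Murre. Il n'y a donc pas de preuve \og maison\fg{} \`a laquelle comparer la v\^otre; je commente n\'eanmoins votre plan.

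Votre strat\'egie pour la surjectivit\'e --- identifier $\End_{\CHM(S)_F}(\mathfrak{h}^1(A))$ \`a une composante de bi-poids dans $\CH^g(A\times_S A)_F$, puis invoquer la transform\'ee de Fourier pour la ramener \`a une pi\`ece de $\CH^1$ contr\^ol\'ee par les homomorphismes --- est bien celle de la r\'ef\'erence originale. Un point \`a pr\'eciser: l'identification finale ne passe pas tout \`a fait par \og classes invariantes par translation dans $\CH^1(A\times_S A^{\vee})$\fg. Dans \cite{DM}, la Fourier envoie la composante pertinente sur la pi\`ce $\CH^1_{(0)}$ d'un sch\'ema ab\'elien appropri\'e (celle o\`u $[n]^*$ agit par $n^2$), et c'est le th\'eor\`eme du cube relatif qui identifie cette derni\`ere aux homomorphismes sym\'etriques vers le dual. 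Le passage par une polarisation que vous indiquez est ensuite correct. Autrement dit, votre \og dernier pas\fg{} est juste dans l'esprit mais l'\'enonc\'e interm\'ediaire demande \`a \^etre reformul\'e.

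Pour l'injectivit\'e, votre argument par r\'ealisation est valable, mais attention \`a l'ordre logique dans ce papier: vous utilisez implicitement $R(\mathfrak{h}^1(A))=H^1(A)$, qui est la Proposition~\ref{coho abel}(i), d\'emontr\'ee \emph{apr\`es} la proposition en question. Cela ne cr\'ee pas de circularit\'e r\'eelle (la preuve de \ref{coho abel}(i) n'invoque pas \ref{Ki}), mais il faudrait le signaler ou r\'eordonner. Une alternative, qui \'evite toute cohomologie de Weil, est d'observer que $\End(A)^{\op}\otimes_{\Z} F$ est semi-simple: un \'el\'ement du noyau est donc repr\'esent\'e par un idempotent, tandis que le noyau de la r\'ealisation vers le num\'erique est un nilid\'eal par Kimura \cite{Kim}; c'est d'ailleurs l'argument que le papier emploie plus loin dans la preuve de \ref{coho abel}(iv).
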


\section{Cohomologies de Weil relatives}\label{Cohomologies}
Rappelons que $F$ désigne un corps de caractéristique nulle. Soit  $ \Ve^{\pm}_F$ la catégorie $F$-linéaire et tensorielle des $F$-super-espaces vectoriels $\Z$-gradués de dimension finie.  Rappelons que la symétrie dans cette catégorie est donnée par la règle des signes de Koszul.
\begin{defin}\label{def coho Weil}
 Soit  $H^*=\oplus_{i\in \mathbb{N}} H^i$ un foncteur contravariant
\[H^*: \Var /S\longrightarrow  \Ve^{\pm}_F.\]
On pose $F(-1):=H^2(\mathbb{P}_S^1)\in \Ve^{\pm}_F$ et soit $F(1)$ son dual. Pour tout entier positif $m$ l'opération de tensorisation par $F(-1)^{\otimes m}$ ou par $F(1)^{\otimes m}$ est notée respectivement $(-m)$ et $(m)$ et appelée \textit{twist de Tate}.

 Une cohomologie de Weil (relative au-dessus de $S$ à coefficients dans $F$) est un tel foncteur $H^*$ qui vérifie la dualité de Poincaré et la formule de Künneth et qui est muni d'une application classe de cycle
\[\CH^i(X)_F\longrightarrow H^{2i}(X)(i).\]
\end{defin}
\begin{rem} Dans \cite{CDWeil}, Cisinski et Déglise définissent une bonne notion de cohomologie de Weil mixte (relative au-dessus de $S$). Dans ce cas la catégorie $\Var /S$ est élargie aux $S$-schémas lisses mais pas forcément projectifs.
\end{rem}
\begin{npar}{Exemples}\label{exe weil} (i) Supposons $k$ plongé dans $\C$, et fixons un point $s \in S(\C)$. Alors le foncteur
\[(f:X\rightarrow S) \mapsto \bigoplus_i (R^if_* \Q_X)_s\] 
est une cohomologie de Weil pour $F=\Q$. Dans ce cas chaque $\Q$-espace vectoriel $(R^if_* \Q_X)_s$ est muni d'une action du groupe de Mumford-Tate générique de la variation de $\Q$-structures de Hodge $R^if_* \Q_X$ au-dessus de $S(\C)$.

(ii) Fixons un nombre premier $\ell$ différent de la caractéristique de $k$, et fixons  un point géométrique $\bar{s}$ de $S$. Alors le foncteur
\[(f:X\rightarrow S) \mapsto \bigoplus_i (R^if_* (\Q_{\ell})_X)_{\bar{s}}\] 
est une cohomologie de Weil pour $F=\Q_{\ell}$. Dans ce cas chaque $\Q_{\ell}$-espace vectoriel $(R^if_* (\Q_{\ell})_X)_{\bar{s}}$ est muni d'une action du groupe fondamental étale  $\pi_{1}^{\textrm{ét}}(S,\bar{s}).$

\end{npar}
\begin{rem}\label{def real} (i) Si $H^*$ est une cohomologie de Weil à coefficients dans $F$ et $F'$ est une extension du corps $F$ alors $X \mapsto H^*(X)\otimes_F F'$
est une cohomologie de Weil à coefficients dans $F'$.

(ii) Considérons l'exemple \ref{exe weil}(i) (le (ii) est analogue) et soit $H_{B}^{*}(X(\C),\Q)$ la cohomologie  de Betti \og standard\fg{} de $X$. L'application classe de cycle la cohomologie de Weil relative au-dessus de $S$ de l'exemple \ref{exe weil}(i) est ainsi obtenue: on compose l'application classe de cycle \og absolue\fg{} $\CH^i(X)\rightarrow H_{B}^{2i}(X(\C),\Q)(i)$ avec   l'application induite par la suite spectrale de Leray   $H_{B}^{2i}(X(\C),\Q)(i)\rightarrow H^0(R^{2i}f_* \Q_X (i)),$  puis on évalue en $s$ les sections globales de $R^{2i}f_* \Q_X (i)$.

(iii) Soient $H^*$ une cohomologie de Weil à coefficients dans $F$ et $\CHM(S)_F$ la catégorie des motifs de Chow relatifs à coefficients dans $F$ (Définition \ref{Motifs de Chow relatifs}). Soient $X\in \Var /S $ une variété de dimension relative $d$ et $Y\in \Var /S$ une autre variété. Définissons l'application 
\[ \Hom(M(X),M(Y))=\CH^{d}(X \times_S Y )_F  \longrightarrow H^{2d}(X \times_S Y)(d)= \Hom(H^*(X),H^*(Y)),\] 
où la première égalité vient de la construction des motifs de Chow (\ref{Motifs de Chow relatifs}), la dernière vient de la formule de Künneth et de la dualité de Poincaré et l'application au centre est l'application classe de cycle.

Il suit formellement des Définitions \ref{Motifs de Chow relatifs} et \ref{def coho Weil} que ceci s'étend en un unique foncteur dit \og de réalisation\fg
\[R: \CHM(S)_F \longrightarrow \Ve^{\pm}_F\]
covariant, $F$-linéaire, tensoriel symétrique, qui commute aux twists de Tate et qui vérifie
\[R \circ M = H^*, \]
où $M : \Var/S \rightarrow \CHM(S)_F$ est le foncteur motif (\ref{Motifs de Chow relatifs}).

\end{rem} 
\begin{prop}\label{coho abel}
Soient $H^*$ une cohomologie de Weil, $R$ le foncteur de réalisation induit (\ref{def real}(iii)) et $\mathfrak{h}^i(A)$ les motifs définis dans le Théorème \ref{DM}. Alors:

(i) pour tout $i$ on a l' égalité $R(\mathfrak{h}^i(A))=H^i(A),$  

(ii) le $F$-espace vectoriel $R(\mathfrak{h}^1(A))=H^1(A)$ est de dimension $2g,$ 

(iii) pour tout $f \in \End(A)$, le polynôme caractéristique de $H^1(f)$ ne dépend pas de la cohomologie de Weil $H^*$,

(iv) l'application $\End(A)^{\op}\otimes_{\Z} F \longrightarrow \End(H^1(A))$ induite par le foncteur $H^1,$ est un morphisme injectif de $F$-algèbres.
\end{prop}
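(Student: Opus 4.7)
The plan is to deduce (i)--(iv) from Theorems \ref{DM}, \ref{Ku} and Proposition \ref{Ki}, combined with the functoriality of the realization $R$ constructed in \ref{def real}(iii).

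I would prove (i) and (ii) simultaneously. By Theorem \ref{Ku} one has an isomorphism of graded algebras $\Sym^*\mathfrak{h}^1(A) \isocan M(A)$ in $\CHM(S)_F$, with $\Sym^{2g}\mathfrak{h}^1(A) = \mathbb{L}^{\otimes g}$ and $\Sym^i\mathfrak{h}^1(A) = 0$ for $i > 2g$. Setting $V := R(\mathfrak{h}^1(A))$ and applying the tensor functor $R$ yields, in $\Ve^{\pm}_F$, an isomorphism of super-algebras $\Sym^* V \isocan H^*(A)$ respecting the integer grading, with $\Sym^i V = 0$ for $i > 2g$ and $\Sym^{2g}V \cong F(-g)$. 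The vanishing of all sufficiently large symmetric powers forces the even part of $V$ to vanish (as we are in characteristic zero), so $V$ is purely odd and $\Sym^i V = \wedge^i V$; from $\wedge^i V = 0$ for $i > 2g$ and $\wedge^{2g}V \neq 0$ one gets $\dim_F V = 2g$. Decomposing $V = \bigoplus_{d \geq 1\text{ odd}} V^d$ by integer degree, the degree-$2g$ summand of $\wedge^{2g}V$ is precisely $\wedge^{2g}V^1$, while every summand involving a factor $V^d$ with $d \geq 3$ lives in strictly higher integer degree; since $\wedge^{2g}V \cong F(-g)$ sits purely in degree $2g$, one must have $\dim V^1 \geq 2g$, hence $V = V^1$. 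Thus $V$ is concentrated in integer degree $1$, which yields at once $\dim_F H^1(A) = 2g$ (part (ii)) and $R(\mathfrak{h}^i(A)) = \wedge^i V = H^i(A)$ for every $i$ (part (i)).

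For (iii), $R$ is an $F$-linear symmetric tensor functor between rigid categories, hence preserves categorical traces. So $\mathrm{tr}(H^1(f)^n) = \mathrm{tr}(H^1(f^n)) = \mathrm{tr}(\mathfrak{h}^1(f^n))$ for every $n \geq 1$; as $\mathfrak{h}^1(A)$ and $\mathfrak{h}^1(f)$ already make sense in $\CHM(S)_\Q$, this motivic trace lies in $\End_{\CHM(S)_\Q}(\mathds{1}) = \CH^0(S)_\Q = \Q$ (using connectedness of $S$) and is visibly independent of both $F$ and the chosen Weil cohomology. Newton's identities then reconstruct the characteristic polynomial of $H^1(f)$ as a universal polynomial in the power sums $\mathrm{tr}(H^1(f)^n)$, so it is rational and cohomology-independent.

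The main obstacle is (iv). By Proposition \ref{Ki}, the map under study is the restriction of $R$ to $\End_{\CHM(S)_F}(\mathfrak{h}^1(A))$, and it suffices to prove injectivity of that restriction. I would fix a polarization $\lambda \colon A \to A^\vee$, denote by $\dagger$ the induced Rosati involution on $\End(A)\otimes_\Z \Q$ extended $F$-linearly to $\End(A)\otimes_\Z F$, and consider the pairing $b(f,g) := \mathrm{tr}(H^1(fg^\dagger))$. By (iii), $b$ takes values in $\Q$ on $\End(A)\otimes_\Z \Q$ and depends only on $A$ and $\lambda$, not on the Weil cohomology. The crucial fact is that $b$ is positive definite: this follows from the classical faithfulness of $H^1(A)$ as $\End(A)\otimes_\Z \Q$-module (via Betti/Hodge theory in characteristic zero, or the Tate module in positive characteristic, with the relative case reduced to fibers by rigidity of endomorphisms of abelian schemes) combined with Rosati positivity of the reduced trace on each simple factor of $\End(A)\otimes_\Z \Q$. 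Nondegeneracy of $b$ over $\Q$ is preserved by base change to $F$, and a direct semisimplicity argument on $\End(A)\otimes_\Z F$ identifies the kernel of the restriction of $R$ with the radical of $b$; hence it is zero. Parts (i)--(iii), by contrast, are essentially formal consequences of the existing motivic decomposition and tensor-functoriality of $R$.
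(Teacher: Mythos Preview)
Your proof is correct. A few remarks comparing it with the paper's argument.

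For (i)--(ii), the underlying idea is the same as the paper's: both hinge on the fact that $\Sym^{2g}V$ is one-dimensional and sits in integer degree~$2g$, which forces $V=R(\mathfrak{h}^1(A))$ to be concentrated in degree~$1$. The paper first introduces the $n_A$-eigenspace decomposition of each $H^j(A)$ and then runs the same degree count via the algebra structure; you bypass the eigenspace step by working directly with the $\Z$-grading on $\Ve^{\pm}_F$, which is a little cleaner. One small point you use without comment: the components $V^d$ with $d<0$ vanish because $V=\Sym^1 V$ embeds as a graded subspace of $H^*(A)$, which lives in non-negative degrees.

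For (iii), the paper simply invokes Kleiman's result, whereas you give a self-contained argument via categorical traces and Newton's identities. This is a genuine and pleasant alternative; note only that the categorical trace in $\Ve^{\pm}_F$ is the super-trace, so it differs from the ordinary trace of $H^1(f)$ by a sign, which is of course harmless for the conclusion.

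For (iv), the paper proposes two quick routes: either reduce to a single classical cohomology via (iii), or note that the kernel is a two-sided ideal of the semisimple algebra $\End(A)^{\op}\otimes_{\Z}F$, hence generated by an idempotent, which Kimura's nilpotence forces to vanish. Your Rosati-positivity argument is valid but is really a dressed-up version of the first route: the positive-definiteness of $b$ over $\Q$ is established by computing in one classical cohomology and then transported via (iii). Also, you only need (and only easily prove) the inclusion $\ker R\subset\mathrm{rad}\,b$; the equality you assert is not obvious and is unnecessary once $\mathrm{rad}\,b=0$.
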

\begin{proof}
(i) Considérons les décompositions suivantes dans $\Ve^{\pm}_F$
\[H^*(A)=\bigoplus_{i=0}^{2g} R(\mathfrak{h}^i(A))  \hspace{0.3cm} \textrm{et}\hspace{0.3cm} H^*(A)=\bigoplus_{i=0}^{2g} H^i(A).\]

Le Théorème \ref{DM} décrit comment l'application $n_A$ agit sur chaque $\mathfrak{h}^i(A)$ via le foncteur motif $M$, on en déduit  
$H^*(n_A) = \bigoplus_{i=0}^{2g} n^i\Id_{R(\mathfrak{h}^i(A))}.$
En particulier pour chaque $i$, on a la décomposition 
\[H^i(A) = \bigoplus_{j=0}^{2g} H^i(A)_j,\]
où $H^i(A)_j$ est le sous-espace propre associé à la valeur propre $n^j$ pour l'action de $H^i(n_A)$. On a donc aussi
 \[R(\mathfrak{h}^i(A)) = \bigoplus_{j=0}^{2g} H^j(A)_i.\]
 
 Remarquons maintenant que si $i$ est impair (respectivement pair) les espaces vectoriels gradués $H^i(A)$ et $R(\mathfrak{h}^i(A))$ sont annulés par une puissance symétrique (respectivement extérieure) assez grande: pour $H^i(A)$ c'est évident car il est concentré en un seul degré et pour $R(\mathfrak{h}^i(A))$ cela vient du Théorème \ref{Ku}(ii). Ceci vaut en particulier pour leurs sous-objets, on en déduit que $H^i(A)_j=0$ quand $i$ et $j$ n'ont pas la même parité.
 
 Remarquons aussi que $R(\mathfrak{h}^1(A))$ est un espace vectoriel de dimension $2g$. En effet, par le Théorème \ref{Ku}(ii), $\Sym^{2g}\mathfrak{h}^1(A)= \mathds{1} (-g)$ et $\Sym^{2g+1}\mathfrak{h}^1(A)=0$,
 ce qui implique 
 \[ \Sym^{2g}R(\mathfrak{h}^1(A)) \neq 0 \hspace{0.3cm} \textrm{et} \hspace{0.3cm}  \Sym^{2g+1}R(\mathfrak{h}^1(A)) = 0.\]
 
 Prouvons maintenant que $R(\mathfrak{h}^1(A)) \subseteq H^1(A)$. Pour tout $j$ impair notons $d_j$ la dimension de $H^j(A)_1$ comme espace vectoriel. Choisissons $v_1,\ldots,v_{2g}\in R(\mathfrak{h}^1(A))$ une base de $ R(\mathfrak{h}^1(A))$ de sorte que chaque élément de cette base appartienne à un $H^j(A)_1$. Utilisons la structure d'algèbre de $H^*(A)$ induite par l'inclusion diagonale $A \hookrightarrow A \times_S A$. Le produit de ces $2g$ éléments appartient à $H^K(A)$, où $K={\sum_{j \, \textrm{impair}}jd_j}.$
 
 Supposons par l'absurde que $R(\mathfrak{h}^1(A)) \not\subseteq H^1(A)$. Ceci est équivalent au fait qu'il y ait un $d_j$ non nul pour un indice $j\neq 1$. Dans ce cas ${\sum_{j \, \textrm{impair}}jd_j}> 2g$ donc $H^K(A)=0.$ En particulier le produit des éléménts $v_1,\ldots,v_{2g}$  est nul. Par le Théorème \ref{Ku}(iii) ceci impliquerait que $ \Sym^{2g} R(\mathfrak{h}^1(A))=0$. On a donc une contradiction, on conclut $R(\mathfrak{h}^1(A)) \subseteq H^1(A)$. 
 
 À ce moment-là,  par le Théorème \ref{Ku}(i) et (iii), on a  que $R(\mathfrak{h}^i(A)) \subseteq H^i(A)$. Comme on a 
 $\bigoplus_{i=0}^{2g} R(\mathfrak{h}^i(A))=H^*(A)=\bigoplus_{i=0}^{2g} H^i(A)$, ces inclusions suffisent à conclure les égalités $R(\mathfrak{h}^i(A)) = H^i(A)$.
 
 \

 (ii) Nous avons démontré plus haut dans la preuve que $R(\mathfrak{h}^1(A))$ est un espace vectoriel de dimension $2g$, on déduit donc (ii) de (i).
 
 \
 
 (iii) C'est une conséquence de (i) et (ii), par \cite[Thm 3.1]{Klei}.
 
 \

 (iv) Tout d'abord c'est un morphisme d'algèbre par (i) et par la Proposition \ref{Ki}. Pour l'injectivité il y a au moins deux manières assez différentes de procéder. 
 
 Par le point (iii) c'est une question qui est indépendante de $H^*,$ on peut alors choisir une cohomologie $\ell$-adique.
 
 En alternative, on remarque que, comme la $F$-algèbre $\End(A)^{\op}\otimes_{\Z} F$ est semisimple de dimension finie, le noyau est engendré par un élément $f$ qui vérifie $f^2=f.$ D'autre part par \cite[Prop. 7.2(ii)]{Kim}, $f$ doit être nilpotent, donc $f=0$.

\end{proof}

\section{Le groupe de Lefschetz $\Lef(A)$}
Dans cette section nous fixons une cohomologie de Weil $H^{*}$ à coefficients dans $F$ (Définition \ref{def coho Weil}) et soit $R$ le foncteur de réalisation induit (\ref{def real}(iii)). Soit $\mathfrak{h}^1(A)$ le motif défini dans le Théorème \ref{DM}, il vérifie  $R(\mathfrak{h}^1(A))=H^1(A)$ (Proposition \ref{coho abel}(i)). Nous rappelons ici la construction du groupe de Lefschetz $\Lef(A);$ c'est par définition un sous-groupe algébrique de $\GL(H^1(A))$. Pour plus de détails voir \cite[\S 2]{Milne}.
\begin{lem}\label{inv ind}
Fixons  $\PL: \mathfrak{h}^1(A) \isocan \mathfrak{h}^{1}(A)^{\vee}(-1)$ un isomorphisme de Poincaré-Lefschetz (corollaire \ref{coro Ku}). Alors:

(i) on a l'égalité  \[\Hom (\mathfrak{h}^1(A),\mathfrak{h}^{1}(A)^{\vee}(-1))=[(\mathfrak{h}^1(\End(A)^{\op})\otimes_{\Z}F)^{\vee}(-1)] \circ \PL ,\]

(ii) l'anti-involution de l'algèbre $\End_{H^1(\End(A)^{\op})}(H^1(A))$ définie par \[f \mapsto f^*:= R(\PL^{-1}) \circ[ f^{\vee}(-1)] \circ R(\PL)\] est indépendante du choix de $\PL$.
\end{lem}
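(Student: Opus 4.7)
The key mechanism behind both parts is Proposition~\ref{Ki}, which identifies $\End(\mathfrak{h}^1(A))$ with $\End(A)^{\op}\otimes_{\Z}F$.

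For (i): since $\PL$ is an isomorphism, right-composition with $\PL$ is a bijection
\[
\End(\mathfrak{h}^1(A)^{\vee}(-1)) \isocan \Hom(\mathfrak{h}^1(A),\mathfrak{h}^1(A)^{\vee}(-1)),\qquad \varphi \mapsto \varphi \circ \PL.
\]
On the other hand the operation $(-)^{\vee}(-1)$ is an anti-equivalence, so it gives a bijection of sets $\End(\mathfrak{h}^1(A)) \to \End(\mathfrak{h}^1(A)^{\vee}(-1))$, $f \mapsto f^{\vee}(-1)$. Composing these two bijections with the identification of Proposition~\ref{Ki} yields the explicit parametrization of $\Hom(\mathfrak{h}^1(A),\mathfrak{h}^1(A)^{\vee}(-1))$ claimed in (i).

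For (ii) I will first check that the formula $\tau_{\PL}(f) := R(\PL)^{-1}\circ f^{\vee}(-1)\circ R(\PL)$ really does define an anti-involution on the centralizer $C := \End_{H^1(\End(A)^{\op})}(H^1(A))$, and then that the result is independent of $\PL$. That $\tau_{\PL}$ is an anti-homomorphism of algebras is immediate from $(fg)^{\vee} = g^{\vee} \circ f^{\vee}$ and the conjugation form of the definition. For the involutive property, routine bookkeeping of duals and Tate twists (using the canonical identification $V\simeq V^{\vee\vee}$ and the identity $(AB)^{\vee} = B^{\vee}A^{\vee}$) yields
\[
\tau_{\PL}^{2}(f)\;=\;\alpha\, f\, \alpha^{-1},\qquad \alpha := R(\PL)^{-1}\circ R(\PL)^{\vee}(-1).
\]
Applying part (i) to the motivic morphism $\PL^{\vee}(-1)\in \Hom(\mathfrak{h}^1(A),\mathfrak{h}^1(A)^{\vee}(-1))$ exhibits $\alpha$ as the realization of an endomorphism of $\mathfrak{h}^1(A)$ coming from $\End(A)^{\op}\otimes_{\Z}F$; in particular $\alpha\in H^1(\End(A)^{\op})$. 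Since every $f\in C$ commutes with $\alpha$, we conclude $\tau_{\PL}^{2}(f)=f$; the same commutation shows $\tau_{\PL}(C)\subseteq C$.

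For the independence, let $\PL'$ be another Poincar\'e-Lefschetz isomorphism. Part (i) furnishes a unique (and necessarily invertible) $g \in \End(A)^{\op}\otimes_{\Z}F$ with $\PL' = g^{\vee}(-1)\circ \PL$. Substituting into the definition of $\tau_{\PL'}$ and again using $(ABC)^{\vee}=C^{\vee}B^{\vee}A^{\vee}$ one obtains
\[
\tau_{\PL'}(f) \;=\; R(\PL)^{-1}\circ [\,R(g)\circ f\circ R(g)^{-1}\,]^{\vee}(-1) \circ R(\PL).
\]
Since $R(g)\in H^1(\End(A)^{\op})$ commutes with $f\in C$, the bracket collapses to $f^{\vee}(-1)$ and we recover $\tau_{\PL}(f)$. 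The only content in either computation is the principle that any two morphisms $\mathfrak{h}^1(A) \to \mathfrak{h}^1(A)^{\vee}(-1)$ differ exactly by an element of $\End(A)^{\op}\otimes_{\Z}F$, i.e. precisely (i). I do not anticipate a genuine obstacle; the only care required is consistent bookkeeping of duals and Tate twists so that the canonical identifications $V \simeq V^{\vee\vee}$ cancel cleanly.
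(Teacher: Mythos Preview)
Your proof is correct and follows essentially the same route as the paper: part (i) is deduced from the isomorphism $\PL$ of Corollaire~\ref{coro Ku} together with the identification $\End(\mathfrak{h}^1(A))\cong\End(A)^{\op}\otimes_{\Z}F$ of Proposition~\ref{Ki}, and part (ii) is then obtained from (i) (after realization, cf.\ Proposition~\ref{coho abel}(i)) exactly as you do, by noting that any two choices of $\PL$ differ by an element of $H^1(\End(A)^{\op})$, which commutes with $f\in C$. The paper's proof is a one-line reference to these ingredients; you have simply spelled out the bookkeeping (including the verification that $\tau_{\PL}$ is involutive), and your computations are correct.
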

\begin{proof}
Le point (i) vient du corollaire \ref{coro Ku} et de la Proposition \ref{Ki}; puis (i) implique (ii) via  la Proposition \ref{coho abel}(i).
\end{proof}
\begin{defin}\label{gpe lef}
Le groupe de Lefschetz $\Lef(A)$ est 
 \[\Lef(A)=\{f \in \End_{H^1(\End(A)^{\op})}(H^1(A)), \hspace{0.2cm} f^*f=\Id\},\]
 où $f^*$ est défini par le Lemme \ref{inv ind}(ii).
\end{defin}

\begin{rem}\label{notaz} (i) Le groupe de Lefschetz est un groupe réductif, a priori non connexe \cite[Summary p.655]{Milne}.

(ii) Dans la littérature (par exemple dans \cite{Milne}) le groupe $\Lef(A)$ ne possède pas de nom particulier et est souvent noté $S(A)$. On appelle plutôt groupe de Lefschetz (souvent noté $L(A)$) un groupe légérement plus grand qui est défini en demandant que  $f^*f$ soit une homothétie. La seule différence entre ces deux groupes est que  $L(A)$ garde une information sur le poids des représentations, ce qui n'a pas d'importance ici.

\end{rem}

\begin{prop}\label{n=1}
Le foncteur $\mathfrak{h}^1$ et la réalisation induisent des isomorphismes de $F$-algèbres
\[\End(A)^{\op}\otimes_{\Z}F \isocan \End_{\CHM(S)_F}(\mathfrak{h}^1(A)) \isocan \End_{\Lef(A)}(H^1(A)).\]
\end{prop}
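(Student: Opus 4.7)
Pour le premier isomorphisme il n'y a rien \`a faire: c'est exactement l'\'enonc\'e de la Proposition \ref{Ki}. L'essentiel du travail porte donc sur le morphisme \[\End_{\CHM(S)_F}(\mathfrak{h}^1(A)) \longrightarrow \End_{\Lef(A)}(H^1(A))\] induit par la r\'ealisation.

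Nous v\'erifierons d'abord que ce morphisme est bien d\'efini, c'est-\`a-dire que l'image de la r\'ealisation tombe dans $\End_{\Lef(A)}(H^1(A))$. Apr\`es identification via la Proposition \ref{Ki}, le morphisme compos\'e $\End(A)^{\op}\otimes_{\Z} F \to \End(H^1(A))$ a pour image la sous-alg\`ebre $H^1(\End(A)^{\op})$, avec laquelle $\Lef(A)$ commute par la D\'efinition \ref{gpe lef} elle-m\^eme. L'injectivit\'e du morphisme compos\'e sera alors imm\'ediate: elle d\'ecoule directement de la Proposition \ref{coho abel}(iv).

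La surjectivit\'e est le point non-formel et constitue l'obstacle principal. L'\'enonc\'e \'etant stable par extension des scalaires, nous nous ram\`enerons au cas o\`u $F$ est alg\'ebriquement clos. Alors la $F$-alg\`ebre semisimple $B:=\End(A)^{\op}\otimes_{\Z}F$ est d\'eploy\'ee, et la classification de Milne (\cite[Summary p.655]{Milne}), d\'ej\`a \'evoqu\'ee dans l'introduction, d\'ecrit $\Lef(A)$ comme un produit explicite de groupes classiques (lin\'eaires, orthogonaux, symplectiques), chaque facteur agissant sur une composante isotypique de $H^1(A)$ sous $B$ comme somme directe de copies de sa repr\'esentation standard.

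Il restera alors \`a calculer le commutant d'une telle action, ce qui s'obtient via le premier th\'eor\`eme fondamental de la th\'eorie classique des invariants (\cite{Weyl}): dans chacun des trois cas, le commutant d'un groupe classique agissant par plusieurs copies de sa repr\'esentation standard est l'alg\`ebre de matrices correspondante. En prenant le produit sur toutes les composantes isotypiques on retrouve exactement $B$, ce qui conclut. Notons que cette derni\`ere \'etape constitue essentiellement le cas $i=r=1$ du calcul g\'en\'eral de $\End_{\Lef(A)}(H^i(A^r,F))$ qui sera men\'e dans les sections 5--7; il est donc naturel de l'invoquer d\'ej\`a ici.
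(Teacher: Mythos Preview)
Your proposal is correct and follows essentially the same route as the paper: the first isomorphism is Proposition~\ref{Ki}, injectivity is Proposition~\ref{coho abel}(iv), and surjectivity is checked after extension of scalars using Milne's explicit description of $\Lef(A)_{\overline{F}}$ and of the decomposition of $H^1(A)_{\overline{F}}$. The only difference is cosmetic: for the surjectivity the paper compares dimensions and invokes Schur's lemma directly (the $V_i$ being irreducible and pairwise non-isomorphic as $\Lef(A)_{\overline{F}}$-modules), whereas you phrase the same computation as an instance of the first fundamental theorem of invariant theory---which for the case $n=1$ reduces precisely to Schur's lemma.
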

\begin{proof}
Grâce aux Propositions \ref{Ki} et  \ref{coho abel}(iv), il reste à voir la surjectivité de la deuxième application. Il suffit alors de comparer la dimension des algèbres. Ceci peut se faire après extensions des scalaires à $\overline{F}$, une clôture algébrique de $F$. Dans ce cas l'algèbre à gauche devient un produit d'algèbres de matrices
\[\End(A)^{\op}\otimes_{\Z} \overline{F}\cong \prod_{i\in I} M_{n_i \times n_i }(\overline{F}).\] En particulier son action décompose $H^1(A)\otimes_F \overline{F}$ en  somme de sous-$\Lef(A)\times_F \overline{F}$-représentations $H^1(A)\otimes_F \overline{F}\cong \oplus_{i\in I} V_i^{\oplus n_i}.$

Par le Lemme de Schur il suffit de voir que les $V_i$ sont des $\Lef(A)\times_F \overline{F}$ représentations irréductibles non isomorphes entre elles, or ceci se déduit de la description explicite des $V_i$ due à Milne \cite[\S 2]{Milne} (voir aussi la Remarque \ref{maledettomilne}).
\end{proof}
\begin{cor}\label{coro n=1}
La réalisation induit les isomorphismes de $F$-espaces vectoriels

\[\begin{array}{r@{\isocan}l}
\Hom_{\CHM(S)_F} (\mathfrak{h}^1(A) \otimes \mathfrak{h}^{1}(A), \mathbb{L}) & \Hom_{\Lef(A)} (H^1(A) \otimes H^{1}(A), F(-1))\hspace{0.2cm} \hspace{0.2cm} \textrm{et}\\

\Hom_{\CHM(S)_F} (\mathbb{L}, \mathfrak{h}^1(A) \otimes \mathfrak{h}^{1}(A)) &  \Hom_{\Lef(A)} (F(-1), H^1(A) \otimes H^{1}(A)),
\end{array}\]
en imposant que  $\Lef(A)$ agisse sur $F(-1)$ par l'action triviale.
\end{cor}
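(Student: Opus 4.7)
L'id�e est de ramener les deux �nonc�s � la Proposition \ref{n=1} par des manipulations formelles d'adjonction (rigidit�) et en composant avec l'isomorphisme de Poincar�-Lefschetz $\PL$ du Corollaire \ref{coro Ku}.

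Je commencerais par traiter le premier isomorphisme. La rigidit� de la cat�gorie $\CHM(S)_F$ donne un isomorphisme naturel
\[\Hom_{\CHM(S)_F}(\mathfrak{h}^1(A)\otimes \mathfrak{h}^1(A), \mathbb{L}) \isocan \Hom_{\CHM(S)_F}(\mathfrak{h}^1(A), \mathfrak{h}^1(A)^{\vee}(-1)),\]
et en composant avec $\PL^{-1}$ on obtient un isomorphisme avec $\End_{\CHM(S)_F}(\mathfrak{h}^1(A))$. C�t� cohomologique, la rigidit� de $\Ve_F^{\pm}$ donne de m�me un isomorphisme
\[\Hom_{\Lef(A)}(H^1(A)\otimes H^1(A), F(-1)) \isocan \Hom_{\Lef(A)}(H^1(A), H^1(A)^{\vee}(-1)),\]
$\Lef(A)$ agissant trivialement sur $F(-1)$. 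Le point cl� est que la r�alisation $R(\PL)$ est $\Lef(A)$-�quivariante: si $f\in \Lef(A)$, la condition $f^*f=\Id$ de la D�finition \ref{gpe lef}, traduite via le Lemme \ref{inv ind}(ii), dit exactement que $R(\PL)\circ f = f^{-\vee}(-1)\circ R(\PL)$, autrement dit que $R(\PL)$ commute � l'action de $\Lef(A)$ (o� $\Lef(A)$ op�re par $f^{-\vee}$ sur le dual). Ainsi composer avec $R(\PL)^{-1}$ fournit un isomorphisme avec $\End_{\Lef(A)}(H^1(A))$.

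Ces identifications �tant compatibles � la r�alisation (le foncteur $R$ est tensoriel et envoie $\PL$ sur $R(\PL)$, donc respecte les adjonctions de rigidit�), on obtient un diagramme commutatif dont les fl�ches horizontales sont les isomorphismes ci-dessus et les fl�ches verticales sont induites par $R$. La fl�che verticale de droite est un isomorphisme par la Proposition \ref{n=1}, d'o� l'�nonc�.

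Le deuxi�me isomorphisme se traite exactement de la m�me mani�re: par rigidit�,
\[\Hom_{\CHM(S)_F}(\mathbb{L}, \mathfrak{h}^1(A)\otimes \mathfrak{h}^1(A)) \isocan \Hom_{\CHM(S)_F}(\mathfrak{h}^1(A)^{\vee}(-1), \mathfrak{h}^1(A)),\]
et la pr�composition par $\PL$ identifie ce dernier � $\End_{\CHM(S)_F}(\mathfrak{h}^1(A))$; analogue parfait c�t� $\Lef(A)$-repr�sentations via $R(\PL)$. On conclut encore par la Proposition \ref{n=1}. Aucune difficult� s�rieuse n'appara�t dans ce raisonnement: tout le contenu non formel est concentr� dans la Proposition \ref{n=1} et dans la compatibilit� de $R(\PL)$ � l'action de $\Lef(A)$, qui est essentiellement la d�finition m�me du groupe $\Lef(A)$.
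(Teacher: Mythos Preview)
Your proof is correct and follows essentially the same approach as the paper: both reduce to Proposition~\ref{n=1} via rigidity/adjunction and the $\Lef(A)$-equivariance of $R(\PL)$, the only cosmetic difference being that the paper first passes to $\Hom(\mathfrak{h}^1(A)\otimes\mathfrak{h}^1(A)^{\vee},\mathds{1})$ and then twists, whereas you adjoint directly to $\Hom(\mathfrak{h}^1(A),\mathfrak{h}^1(A)^{\vee}(-1))$. Your explicit verification that $R(\PL)$ is $\Lef(A)$-equivariant from the defining relation $f^*f=\Id$ is a welcome clarification of a point the paper leaves implicit.
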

\begin{proof}
Par la proposition précédente on a:
\[\begin{array}{r@{\isocan}l}
\Hom_{\CHM(S)_F} (\mathfrak{h}^1(A) \otimes \mathfrak{h}^{1}(A)^{\vee}, \mathds{1}) & \Hom_{\Lef(A)} (H^1(A) \otimes H^{1}(A)^{\vee}, F)\\

\Hom_{\CHM(S)_F} (\mathds{1}, \mathfrak{h}^1(A) \otimes \mathfrak{h}^{1}(A)^{\vee}) &  \Hom_{\Lef(A)} (F, H^1(A) \otimes H^{1}(A)^{\vee}),
\end{array}\]
d'où
\[\begin{array}{r@{\isocan}l}
\Hom_{\CHM(S)_F} (\mathfrak{h}^1(A) \otimes \mathfrak{h}^{1}(A)^{\vee}(-1), \mathbb{L}) & \Hom_{\Lef(A)} (H^1(A) \otimes H^{1}(A)^{\vee}(-1), F(-1))\\

\Hom_{\CHM(S)_F} (\mathbb{L}, \mathfrak{h}^1(A) \otimes \mathfrak{h}^{1}(A)^{\vee}(-1)) &  \Hom_{\Lef(A)} (F(-1), H^1(A) \otimes H^{1}(A)^{\vee}(-1)).
\end{array}\]
Pour conclure il suffit de remarquer que, par construction, la réalisation d'un isomorphisme de \og Poincaré-Lefschetz\fg{} (corollaire \ref{coro Ku}), $R(\PL): H^1(A) \isocan H^{1}(A)^{\vee}(-1)$,   est $\Lef(A)$-équivariant.
\end{proof}
\begin{rem}\label{maledettomilne}
\begin{enumerate}
\item Les résultats de \cite{Milne} sont rédigés dans le cas des variétés abéliennes définies sur un corps algébriquement clos, mais en fait la section \S2 de loc. cit. n'utilise que le fait qu'on dispose de l'action d'une algèbre semisimple avec involution positive sur un $F$-espace vectoriel. Ceci s'applique bien à notre cadre par les Propositions \ref{Ki} et \ref{coho abel}(i).
\item Les algèbres semisimples avec involution positive sont classifiées (classification d'Albert) en type I, II, III et IV (voir par exemple \cite[\S 21]{Mum}). Les résultats qu'on utilise dans \cite[\S 2]{Milne} sont plus exactement: page 651 pour le type I, page 652 pour le type II et page 653 pour les types III et IV.
\end{enumerate}
\end{rem}

\section{L'algèbre $\mathcal{B}_{n,F}$}
Dans cette section $F$ désigne un corps de caractéristique nulle quelconque et \linebreak ${\mathfrak{h}^1(A) \in \CHM(S)_F}$ est le motif défini par le Théorème \ref{DM}.
\begin{lem}\label{gen ind}
Fixons  $\PL: \mathfrak{h}^1(A) \isocan \mathfrak{h}^{1}(A)^{\vee}(-1)$ un isomorphisme (corollaire \ref{coro Ku}). Soient ${\pi:  \mathfrak{h}^1(A) \otimes \mathfrak{h}^{1}(A) \rightarrow \mathbb{L}}$ le morphisme qui correspond à $\PL$ par adjonction  et  ${\iota: \mathbb{L}   \rightarrow \mathfrak{h}^1(A) \otimes \mathfrak{h}^{1}(A)}$ celui qui correspond à $\PL^{-1}$. Alors:

(i) on a les égalités  \[\begin{array}{r@{=}l}
\Hom (\mathfrak{h}^1(A) \otimes \mathfrak{h}^{1}(A), \mathbb{L}) &  \pi \circ [\mathfrak{h}^1(\End(A)^{\op}\otimes_{\Z}F)\otimes \Id_{\mathfrak{h}^{1}(A)}] \hspace{0.2cm} \textrm{et}\\

\Hom (\mathbb{L} ,\mathfrak{h}^1(A) \otimes \mathfrak{h}^{1}(A)) & [\mathfrak{h}^1(\End(A)^{\op}\otimes_{\Z}F)\otimes \Id_{\mathfrak{h}^{1}(A)}] \circ \iota ,
\end{array}
\]

(ii) on a l'égalité $\pi \circ \iota= 2g \cdot \Id_{\mathbb{L}},$

(iii) le morphisme $P=  \frac{1}{2g}  \iota \circ \pi \in \End(\mathfrak{h}^1(A) \otimes \mathfrak{h}^{1}(A))$ est un projecteur définissant $\mathbb{L}$ à l'intérieur de $\mathfrak{h}^1(A) \otimes \mathfrak{h}^{1}(A),$ c'est-à-dire $P^2=P$ et $\Im P \cong \mathbb{L}$.
\end{lem}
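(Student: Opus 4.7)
The plan is to deduce (i) from Lemma \ref{inv ind}(i) via the standard rigid-tensor adjunction, to prove (ii) by reducing to a cohomological computation through the realization functor, and to obtain (iii) as a purely formal consequence of (ii).

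For (i), I would use the adjunction isomorphism
\[
\Hom(\mathfrak{h}^1(A) \otimes \mathfrak{h}^{1}(A), \mathbb{L}) \isocan \Hom(\mathfrak{h}^1(A), \mathfrak{h}^{1}(A)^{\vee}(-1))
\]
induced by the rigidity of $\CHM(S)_F$. By construction $\pi$ is sent to $\PL$ under this bijection, and precomposition with a morphism of the form $\mathfrak{h}^1(f) \otimes \Id_{\mathfrak{h}^1(A)}$ on the source corresponds to precomposition with $\mathfrak{h}^1(f)$ on $\mathfrak{h}^1(A)$. Applying Lemma \ref{inv ind}(i), which describes $\Hom(\mathfrak{h}^1(A), \mathfrak{h}^{1}(A)^{\vee}(-1))$ as the set of morphisms of the form $\PL \circ \mathfrak{h}^1(f)$ for $f \in \End(A)^{\op} \otimes_{\Z} F$, yields the first equality. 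The second equality follows by the dual adjunction, with $\iota$ playing the role of $\pi$ and $\PL^{-1}$ the role of $\PL$.

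For (ii), the main point is that $\pi \circ \iota$ is an element of $\End_{\CHM(S)_F}(\mathbb{L}) = \CH^0(S)_F = F$, where the last equality uses the connectedness of $S$. The realization functor induces an isomorphism $\End(\mathbb{L}) \isocan \End(R(\mathbb{L})) = \End(F(-1)) = F$ sending $1$ to $1$, so it suffices to check $R(\pi) \circ R(\iota) = 2g \cdot \Id$. This is now an elementary computation in $\Ve_F^{\pm}$: unwinding the adjunctions, $R(\iota)(1)$ equals $\sum_{i} R(\PL)^{-1}(e_i^*) \otimes e_i$ for any basis $(e_i)$ of $H^1(A)$ with dual basis $(e_i^*)$, and $R(\pi)$ sends this to $\sum_i e_i^*(e_i) = \dim H^1(A) = 2g$ by Proposition \ref{coho abel}(ii). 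Importantly, neither $R(\pi)$ nor $R(\iota)$ involves the symmetry isomorphism of $\Ve_F^{\pm}$: only (co)evaluation maps and $R(\PL)^{\pm 1}$ enter, so no Koszul sign appears, and one really obtains $+2g$ rather than the super-dimension $-2g$. This is the single delicate point of the proof.

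For (iii), given (ii), the computation
\[
P^2 = \tfrac{1}{4g^2} \iota \circ \pi \circ \iota \circ \pi = \tfrac{1}{4g^2}\, \iota \circ (2g\,\Id_{\mathbb{L}}) \circ \pi = \tfrac{1}{2g}\, \iota \circ \pi = P
\]
is immediate. To identify $\Im P$ with $\mathbb{L}$ it suffices to observe that $\pi \circ \bigl(\tfrac{1}{2g}\iota\bigr) = \Id_{\mathbb{L}}$, so $\tfrac{1}{2g}\iota$ and $\pi$ display $\mathbb{L}$ as a direct summand of $\mathfrak{h}^1(A) \otimes \mathfrak{h}^1(A)$ corresponding precisely to the projector $P$.
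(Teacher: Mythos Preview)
Your proof is correct and follows exactly the same route as the paper: (i) by adjunction from Lemma~\ref{inv ind}(i), (ii) by reducing to a linear-algebra identity via the realization (using $\End(\mathbb{L})=F$ and Proposition~\ref{coho abel}(ii)), and (iii) formally from (ii). Your explicit discussion of why no Koszul sign enters in the computation of $R(\pi)\circ R(\iota)$ is a welcome addition that the paper leaves implicit.
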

\begin{proof}
(i) On le déduit du Lemme \ref{inv ind}(i) par adjonction.

(ii) Comme $\End_{\CHM(S)_F}(\mathbb{L})=F\cdot \Id_{\mathbb{L}},$ l'énoncé peut se vérifier après réalisation. Il découle alors de la Proposition \ref{coho abel}(ii) et du fait suivant. Soient $V$ un $F$-espace vectoriel de dimension $d$ et $f: V \isocan V^{\vee}$ un isomorphisme. Soient $\pi_f: V\otimes V \longrightarrow F$ le morphisme qui correspond à $f$ par adjonction  et $\iota_f: F   \rightarrow V \otimes V$ celui qui correspond à $f^{-1}$. Alors $\pi_f \circ \iota_f=d \cdot \Id.$

(iii) C'est une conséquence formelle de (ii).
\end{proof}
\begin{defin}\label{def Bn}
Pour tout naturel $n\geq 1$ notons $ \mathcal{B}_{n,F}\subseteq\End_{\CHM(S)_F}(\mathfrak{h}^1(A)^{\otimes n})$  la sous-$F$-algèbre engendrée par 
\begin{itemize}
\item[(a)] le groupe des permutations $\mathcal{S}_n$,
\item[(b)] l'anneau $\mathfrak{h}^1(\End(A)^{\op})\otimes \Id_{\mathfrak{h}^1(A)}^{\otimes (n-1)}$,
\item[(c)] le morphisme $P\otimes \Id_{\mathfrak{h}^1(A)}^{\otimes (n-2)},$ quand $n\geq 2$.
\end{itemize}
Ici $P =\frac{1}{2g} \iota \circ \pi$ est défini comme dans le Lemme \ref{gen ind}(iii).
\end{defin}

\begin{rem} Comme on le verra dans le Théorème \ref{thm cle}, l'algèbre \og motivique\fg{} $\mathcal{B}_{n,F}$ fournit un relévement explicite de l'algèbre \og cohomologique\fg{} $\End_{\Lef(A)}(H^1(A,F)^{\otimes n})$. Il ne nous semble pas clair a priori que $\mathcal{B}_{n,F}$ soit la bonne algèbre à considérer pour obtenir un tel relévement. Sa définition a été trouvée en étudiant des exemples (notamment le schéma abélien universel au-dessus d'une surface modulaire de Picard).
\end{rem}

\begin{rem}\label{rem Bn} (i) Grâce à la présence du groupe des permutations (a), la définition précédente est équivalente si on remplace (b) par l'action de $\End(A)^{\op}$ dans une autre coordonnée, ou encore par l'action de l'anneau $\mathfrak{h}^1(\End(A)^{\op}) \times \cdots \times \mathfrak{h}^1(\End(A)^{\op})$ \linebreak($n$ fois).

(ii) Grâce à la présence de l'anneau des endomorphismes (b) et par le Lemme \ref{gen ind}(i), la définition précédente est équivalente si dans (c) on remplace $P$ par l'ensemble $\Hom (\mathbb{L} ,\mathfrak{h}^1(A) \otimes \mathfrak{h}^{1}(A)) \circ \Hom (\mathfrak{h}^1(A) \otimes \mathfrak{h}^{1}(A), \mathbb{L})$.
En particulier la définition ne dépend pas des choix faits pour définir $P$.

(iii) Soit $F'$ une extension du corps $F$. Sous l'identification canonique de $\CHM(S)_{F'}$ avec l'enveloppe pseudo-abélienne de $\CHM(S)_{F}\otimes_F F'$ on a l'égalité \[\mathcal{B}_{n,F'}=\mathcal{B}_{n,F}\otimes_F F'.\] En particulier $ \mathcal{B}_{n,F}= \mathcal{B}_{n,\Q}\otimes_{\Q} F.$

\end{rem}

\section{Théorème de décomposition}\label{Théorème de décomposition}
Dans cette section nous fixons une cohomologie de Weil $H^*$ à coefficients dans $F$ (Définition \ref{def coho Weil}). Soient $R$ le foncteur de réalisation (\ref{def real}(iii)) et $\Lef(A)$ le groupe de Lefschetz (Définition \ref{gpe lef}) qu'on en déduit. Fixons un entier $n\geq 1$ quelconque et soit $\mathcal{B}_{n,F}$ l'algèbre définie dans \ref{def Bn}.
\begin{thm}\label{thm cle}
La réalisation induit un isomorphisme d'algèbres  
\[R:\mathcal{B}_{n,F} \isocan \End_{\Lef(A)}(H^1(A)^{\otimes n}).\]
Ceci fournit un relèvement explicite de l'algèbre $\End_{\Lef(A)}(H^1(A)^{\otimes n})$ dans $\CHM(S)_F$. En particulier, toute décomposition de $H^1(A)^{\otimes n}$ en tant que $\Lef(A)$-représentation se relève canoniquement en une décomposition de $\mathfrak{h}^1(A)^{\otimes n}$ dans $\CHM(S)_F$ et de plus, deux sous-représentations de $H^1(A)^{\otimes n}$ isomorphes se relèvent en deux motifs isomorphes.
\end{thm}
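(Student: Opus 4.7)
Le plan est de d\'ecomposer l'\'enonc\'e en deux parties sym\'etriques --- \textbf{surjectivit\'e} et \textbf{injectivit\'e} de la r\'ealisation $R$ --- puis d'en d\'eduire formellement le rel\`evement des d\'ecompositions. V\'erifions d'abord que $R$ envoie bien $\mathcal{B}_{n,F}$ dans l'alg\`ebre des endomorphismes $\Lef(A)$-\'equivariants: les permutations commutent avec toute action diagonale, les g\'en\'erateurs (b) commutent avec $\Lef(A)$ par d\'efinition m\^eme du groupe de Lefschetz (D\'efinition \ref{gpe lef}), et le projecteur $P$ est construit \`a partir d'un isomorphisme de Poincar\'e--Lefschetz dont la r\'ealisation est $\Lef(A)$-\'equivariante (argument d\'ej\`a utilis\'e dans la preuve du Corollaire \ref{coro n=1}).

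Pour la \textbf{surjectivit\'e}, puisque $\mathcal{B}_{n,F} = \mathcal{B}_{n,\Q} \otimes_{\Q} F$ (Remarque \ref{rem Bn}(iii)) et que $\Lef(A)$ est r\'eductif, il suffit de v\'erifier l'\'egalit\'e apr\`es extension \`a une cl\^oture alg\'ebrique $\overline{F}$ de $F$. Apr\`es cette extension, l'alg\`ebre semisimple \`a involution positive $\End(A)^{\op} \otimes_{\Z} \overline{F}$ devient un produit d'alg\`ebres de matrices (classification d'Albert), et $\Lef(A) \times_F \overline{F}$ s'identifie alors \`a un produit explicite de groupes classiques ($\GL$, $\Ort$, $\Sp$) agissant sur la d\'ecomposition isotypique correspondante de $H^1(A) \otimes \overline{F}$ \cite[\S 2]{Milne}. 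La th\'eorie classique des invariants de Weyl \cite{Weyl} fournit alors des g\'en\'erateurs du commutant de ce produit agissant sur la $n$-i\`eme puissance tensorielle: permutations des facteurs, action de l'alg\`ebre des endomorphismes, et contractions par les formes bilin\'eaires invariantes. Il reste \`a reconna\^itre ces trois familles dans les g\'en\'erateurs (a), (b), (c) de $\mathcal{B}_{n,F}$; les deux premi\`eres correspondances sont imm\'ediates, et pour la derni\`ere on observe que $R(P)$ est, \`a un scalaire pr\`es, la composition de l'accouplement et de sa co-unit\'e induits par la polarisation, laquelle se restreint aux contractions correspondantes sur chaque facteur orthogonal ou symplectique.

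L'\textbf{injectivit\'e} constitue l'obstacle principal. La difficult\'e est qu'aucun argument direct de dimension n'est disponible: l'alg\`ebre $\mathcal{B}_{n,F}$ est d\'efinie par g\'en\'erateurs sans relations explicites, et en g\'en\'eral les alg\`ebres de correspondances de Chow peuvent \^etre strictement plus grandes que leurs images cohomologiques. La strat\'egie annonc\'ee dans l'introduction est d'invoquer le th\'eor\`eme de scindage de O'Sullivan \cite{O'S}: si les g\'en\'erateurs d'une sous-alg\`ebre de correspondances sont des cycles \og symmetrically distinguished\fg{}, alors la classe de cycle restreinte \`a l'alg\`ebre engendr\'ee est automatiquement injective. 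Je v\'erifierais donc que chacun des g\'en\'erateurs de $\mathcal{B}_{n,F}$ est de ce type: pour les permutations et les endomorphismes il s'agit de graphes de morphismes de sch\'emas ab\'eliens, et pour $P$ il faudra reconna\^itre le projecteur comme provenant de la classe d'une polarisation (qui est sym\'etrique au sens voulu).

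Enfin, l'\'enonc\'e sur le rel\`evement des d\'ecompositions suit formellement de l'isomorphisme $\mathcal{B}_{n,F} \isocan \End_{\Lef(A)}(H^1(A)^{\otimes n})$. Une d\'ecomposition $H^1(A)^{\otimes n} = \bigoplus V_i$ en sous-$\Lef(A)$-repr\'esentations correspond \`a une famille de projecteurs orthogonaux, laquelle se rel\`eve de mani\`ere unique dans $\mathcal{B}_{n,F}$, donnant une d\'ecomposition $\mathfrak{h}^1(A)^{\otimes n} = \bigoplus M_i$ r\'ealisant la d\'ecomposition initiale. L'unicit\'e du rel\`evement entra\^ine aussi que deux sous-repr\'esentations isomorphes se rel\`event en motifs isomorphes, car tout isomorphisme entre elles appartient \`a $\End_{\Lef(A)}(H^1(A)^{\otimes n})$ (prolong\'e par z\'ero sur les suppl\'ementaires) et se rel\`eve donc \`a $\mathcal{B}_{n,F}$.
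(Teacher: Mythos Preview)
Votre plan est correct et suit essentiellement la m\^eme approche que celle du papier: surjectivit\'e via extension \`a $\overline{F}$, description de $\Lef(A)\times_F\overline{F}$ comme produit de groupes classiques (Milne) et th\'eorie des invariants de Weyl; injectivit\'e via le th\'eor\`eme de scindage de O'Sullivan.

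Un point m\'erite d'\^etre pr\'ecis\'e dans votre esquisse de l'injectivit\'e. Vous \'ecrivez que les g\'en\'erateurs (a) et (b) sont \og des graphes de morphismes de sch\'emas ab\'eliens\fg{}, mais ce n'est pas litt\'eralement le cas: ce sont des endomorphismes de $\mathfrak{h}^1(A)^{\otimes n}$, et non de $M(A^n)$; ils s'obtiennent en composant les graphes avec les projecteurs de K\"unneth d\'efinissant $\mathfrak{h}^1$. Le papier traite ceci par un lemme interm\'ediaire: il montre d'abord que $\mathcal{I}\circ\mathcal{N}(\mathfrak{h}^1(A))=\mathfrak{h}^1(A)$ (via la caract\'erisation par l'action de $n_A$ et la condition $M^{\ab}=\mathcal{I}\circ\mathcal{N}\circ M^{\ab}$), puis que $\End(\mathfrak{h}^1(A))$ et les espaces $\Hom(\mathbb{L},\mathfrak{h}^1(A)^{\otimes 2})$, $\Hom(\mathfrak{h}^1(A)^{\otimes 2},\mathbb{L})$ sont dans l'image de $\mathcal{I}$ par un argument de dimension finie. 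Votre intuition est la bonne, mais l'argument direct \og graphe donc sym\'etriquement distingu\'e\fg{} ne suffit pas tel quel.
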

\begin{proof}
L'injectivité est prouvée dans la section \ref{scindage}. Pour la surjectivité, étendons les scalaires de $F$ à $\overline{F}$ une clôture algébrique de $F$. Dans ce cas le groupe $\Lef(A)\times_F \overline{F}$ devient isomorphe à un produit de groupes classiques 
\[\Lef(A)\times_F \overline{F}\cong \prod_{j\in J_l} G^l_j  \times \prod_{j\in J_s} G^s_j \times \prod_{j\in J_o} G^o_j\]
où $G^l_j$ est un groupe général linéaire pour chaque $j\in J_l$, $G^s_j$ est un groupe symplectique pour chaque $j\in J_s$, et $G^o_j$ est un groupe orthogonal pour chaque $j\in J_o$ \cite[Summary p.655]{Milne}. Soient $W^l_j$ la représentation standard  de $G^l_j$, $W^s_j$ la représentation standard  de $G^s_j$ et $W^o_j$ la représentation standard  de $G^o_j$. Notons $\mathcal{E}$ l'ensemble 
\[\mathcal{E}=\{W^l_j\}_{j \in J_l} \cup   \{(W^l_j)^{\vee}\}_{j \in J_l} \cup  \{W^s_j\}_{j \in J_s} \cup  \{W^o_j\}_{j \in J_o} .\] 
Dans la suite un élément de l'ensemble $\mathcal{E}$ sera vu comme représentation de $\Lef(A)\times_F \overline{F}$ via la décomposition $\Lef(A)\times_F \overline{F}\cong \prod_{j\in J_l} G^l_j  \times \prod_{j\in J_s} G^s_j \times \prod_{j\in J_o} G^o_j$, en décrétant que tous les groupes de la décomposition sauf un agissent trivialement. Milne démontre que l'ensemble des sous-$\Lef(A)\times_F \overline{F}$-représentations irréductibles de $H^1(A)\otimes_F \overline{F}$ (à isomorphisme près)  coïncide avec l'ensemble $\mathcal{E}$ \cite[\S 2]{Milne}, voir aussi la Remarque \ref{maledettomilne}. 

\

Par la Proposition \ref{n=1} et la Remarque \ref{rem Bn}(i), il nous suffit donc de voir que  les éléments des $\overline{F}$-espaces vectoriels de la forme
\[\Hom_{\Lef(A)\times_F \overline{F}}\left(\bigotimes_{1 \leq i\leq n} E^d_i,\bigotimes_{1 \leq i\leq n} E^a_i\right)\]
ont un antécédent dans $\mathcal{B}_{n,F}\otimes_F \overline{F}$ (pour tout choix possible des $E^d_i,E^a_i$ dans $\mathcal{E}$). 

Fixons un tel espace vectoriel $\mathcal{H}=\Hom_{\Lef(A)\times_F \overline{F}}\left(\bigotimes_{1 \leq i\leq n} E^d_i,\bigotimes_{1 \leq i\leq n} E^a_i\right)$. Il suffit de comprendre l'espace vectoriel 
\[\mathcal{H}'=\Hom_{\prod_{j\in J_l} G^l_j  \times \prod_{j\in J_s} G^s_j \times \prod_{j\in J_o} G^o_j}\left(\left(\bigotimes_{1 \leq i\leq n} E^d_i\right) \otimes \left( \bigotimes_{1 \leq i\leq n} (E^a_i)^{\vee}\right), \overline{F}\right).\]

Nous avons une liste explicite de générateurs de $\mathcal{H}'$, par \cite[3.1]{Milne} et \cite{Weyl}. Pour la décrire, considérons les partitions de l'ensemble $E^d_1,\ldots,E^d_n,(E^a_1)^{\vee},\ldots,(E^a_n)^{\vee}$ en $n$ sous-ensembles de cardinalité $2$, telles que chaque sous ensemble soit formé soit de deux représentations isomorphes à  $W^s_j$ (pour un même $j$), soit de deux représentations isomorphes à  $W^o_j$ (pour un même $j$), soit d'une représentation isomorphe à $W^l_j$ et une isomorphe à $(W^l_j)^{\vee}$ (toujours pour un même $j$). Pour chacune de tels couples il y a un accouplement standard et considérons le produit tensoriel de ces $n$ accouplements. Il appartient à $\mathcal{H}'$, et mieux, $\mathcal{H}'$ est engendré par les morphismes ainsi construits. Il reste à voir que: si $p' \in \mathcal{H}'$ est un morphisme ainsi construit et $p \in \mathcal{H}$ est celui qui lui correspond par adjonction, alors $p$ a un antécédent dans $\mathcal{B}_{n,F}\otimes_F \overline{F}$.

Quitte à composer avec des permutations (ce que l'on peut faire par \ref{def Bn}(a)) on peut supposer qu'il existe un $0 \leq r\leq n$ tel que la partition soit ainsi faite
\[(E^d_1,E^{a \vee}_1),\ldots ,(E^d_r,E^{a \vee}_r),(E^d_{r+1},E^d_{r+2}),(E_{r+1}^{a \vee},E^{a \vee}_{r+2}), \ldots ,(E^d_{n-1},E^d_{n}),(E^{a \vee}_{n-1},E^{a \vee}_{n}).\]

Dans ce cas $p$ est de la forme suivante 
\[p=\Id_{E^d_1\otimes \ldots \otimes E^d_r}\otimes  (\iota_{r+1,r+2}\circ \pi_{r+1,r+2}) \otimes \ldots \otimes (\iota_{n-1,n}\circ \pi_{n-1,n}), \]
avec $\pi_{i,i+1}: E^d_{n-1} \otimes E^d_{n} \rightarrow \overline{F}$ et $\iota_{i,i+1}: \overline{F} \rightarrow  E^a_{n-1} \otimes E^a_{n}$ (de tels morphismes sont uniques à un scalaire près). On peut donc conclure par la Remarque \ref{rem Bn}(ii).
\end{proof}

\section{Résultats de scindage}\label{scindage}
 Soient  $M:\Var /S \rightarrow \CHM(S)_F$ le foncteur motif (\ref{Motifs de Chow relatifs}) et $M^{\ab}$ sa restriction à la catégorie des schémas abéliens $\Ab_S$ (sous-catégorie non pleine de $\Var /S$).
 
  Soit $\NumM(S)_F$ la catégorie des motifs numériques définie en remplaçant l'équivalence rationnelle par l'équivalence numérique. Soient  $\CHM(S)^{\ab}_F\subset \CHM(S)_F$ et $\NumM(S)^{\ab}_F \subset \NumM(S)_F$ les sous-catégories pleines $F$-linéaires, tensorielles, symétriques, rigides et pseudo-abéliennes engendrées par les motifs des schémas abéliens. Par construction, on dispose canoniquement d'un foncteur plein \linebreak{$F$-linéaire}, tensoriel, symétrique,  et qui commute aux twists de Tate
\[\mathcal{N}:\CHM(S)_F^{\ab} \longrightarrow \NumM(S)^{\ab}_F.\]

Le \textit{Théorème de scindage} qui suit est dû à O'Sullivan\footnote{Ce théorème de scindage est énoncé à page 71, à l'intérieur de la preuve de \cite[thm. 6.1.1]{O'S}. C'est l'ingrédient clé pour la preuve de ce dernier. En effet \cite[thm. 6.1.1]{O'S} n'est rien d'autre que l'énoncé analogue formulé avec les groupes des Chow au lieu des motifs de Chow.

Sans la condition $M^{\ab}= \mathcal{I} \circ  \mathcal{N} \circ M^{\ab}$, la section n'est plus unique et dans ce cas c'est un résultat de André et Kahn \cite{AK}.} \cite{O'S}.

\begin{thm}\label{thm sec}
Soit
$\mathcal{N}:\CHM(S)_F^{\ab} \longrightarrow \NumM(S)^{\ab}_F,$ le foncteur ci-dessus.
Il existe un unique foncteur 
\[\mathcal{I}:\NumM(S)^{\ab}_F \longrightarrow \CHM(S)_F^{\ab}\]
 $F$-linéaire, tensoriel, symétrique, et qui commute aux twists de Tate, vérifiant
 \[\mathcal{N} \circ  \mathcal{I}=\Id_{\NumM(S)^{\ab}_F}\hspace{0.2cm} \textrm{et} \hspace{0.2cm} M^{\ab}= \mathcal{I} \circ  \mathcal{N} \circ M^{\ab}.\]
\end{thm}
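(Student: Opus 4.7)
Le plan est de r\'eduire le th\'eor\`eme \`a la th\'eorie d'O'Sullivan des cycles \emph{sym\'etriquement distingu\'es} (\og s-distingu\'es\fg{} dans la suite) \cite{O'S}. Rappelons qu'un cycle $\alpha\in\CH^*(A)_F$ sur un sch\'ema ab\'elien $A/S$ est s-distingu\'e si toute expression polynomiale en ses tir\'es-en-arri\`ere $n_A^*\alpha$ (pour $n\in\Z$) et en leurs produits d'intersection sur les puissances $A^m$, qui est num\'eriquement triviale, est rationnellement triviale. Le r\'esultat central d'O'Sullivan affirme que les cycles s-distingu\'es forment une sous-$F$-alg\`ebre de $\CH^*(A)_F$ qui se projette bijectivement sur son quotient num\'erique et qui est stable par tir\'es-en-arri\`ere et pouss\'es-en-avant le long de tout morphisme de sch\'emas ab\'eliens. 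Comme $A\times_S B$ est encore un $S$-sch\'ema ab\'elien pour $A,B\in\Ab/S$, les correspondances entre deux motifs ab\'eliens relatifs entrent dans ce cadre.

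\textbf{Construction de $\mathcal{I}$.} Sur la sous-cat\'egorie pleine de $\NumM(S)_F^{\ab}$ engendr\'ee par les motifs de sch\'emas ab\'eliens, on pose $\mathcal{I}(\mathcal{N}(M^{\ab}(A)))=M^{\ab}(A)$; pour une correspondance num\'erique $\bar{\alpha}\in\CH^*(A\times_S B)_F/\!\sim_{\num}$, on d\'efinit $\mathcal{I}(\bar{\alpha})$ comme son unique rel\`evement s-distingu\'e dans $\CH^*(A\times_S B)_F$. La fonctorialit\'e d\'ecoule du fait que la composition des correspondances (projection, intersection, pouss\'e-en-avant) pr\'eserve la s-distinction. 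On \'etend ensuite $\mathcal{I}$ pseudo-ab\'eliennement \`a $\NumM(S)_F^{\ab}$ tout entier: tout idempotent num\'erique $e$ admet un unique rel\`evement s-distingu\'e $\tilde{e}\in\CH^*(A\times_S A)_F$, qui v\'erifie automatiquement $\tilde{e}^2=\tilde{e}$ par unicit\'e appliqu\'ee \`a $\tilde{e}^2-\tilde{e}$, fournissant un facteur direct bien d\'efini dans $\CHM(S)_F^{\ab}$. La compatibilit\'e aux twists de Tate r\'esulte formellement de la tensorialit\'e.

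\textbf{V\'erifications et unicit\'e.} La $F$-lin\'earit\'e, la tensorialit\'e (les produits externes de cycles s-distingu\'es restent s-distingu\'es) et la compatibilit\'e avec sym\'etries et twists de Tate sont des cons\'equences formelles des propri\'et\'es rappel\'ees. L'\'egalit\'e $\mathcal{N}\circ\mathcal{I}=\Id$ est tautologique par construction. Pour $M^{\ab}=\mathcal{I}\circ\mathcal{N}\circ M^{\ab}$, il suffit de v\'erifier que le graphe $\Gamma_\phi\subset A\times_S B$ d'un morphisme $\phi:A\rightarrow B$ de $\Ab/S$ est s-distingu\'e; or $\Gamma_\phi=(\Id_A,\phi)_*[A]$ avec $(\Id_A,\phi):A\rightarrow A\times_S B$ un homomorphisme de sch\'emas ab\'eliens, d'o\`u la conclusion (la classe fondamentale $[A]$ \'etant s-distingu\'ee, et la s-distinction \'etant pr\'eserv\'ee par pouss\'e-en-avant). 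Pour l'unicit\'e, tout autre foncteur $\mathcal{I}'$ satisfaisant les m\^emes hypoth\`eses co\"inciderait avec $\mathcal{I}$ sur les graphes de morphismes de $\Ab/S$ par la seconde \'egalit\'e; par $F$-lin\'earit\'e, tensorialit\'e et passage aux facteurs directs, l'image de $\mathcal{I}'$ serait contenue dans la sous-alg\`ebre s-distingu\'ee, et l'injectivit\'e ($\mathcal{N}\circ\mathcal{I}'=\Id$) force alors $\mathcal{I}'=\mathcal{I}$.

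\textbf{Obstacle principal.} Le contenu non trivial est concentr\'e dans la construction m\^eme des cycles s-distingu\'es et leurs propri\'et\'es (unicit\'e du rel\`evement par classe num\'erique, stabilit\'e par intersection et images directes/inverses), qui repose sur une analyse spectrale fine de l'action des multiplications $n_A$ sur $\CH^*(A)_F$ (d\'ecomposition de type Beauville-Deninger-Murre). On utilise ce r\'esultat d'O'Sullivan sans le red\'emontrer; la preuve ci-dessus n'en est alors qu'une traduction formelle en termes de correspondances et de motifs.
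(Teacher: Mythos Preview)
The paper does not prove this statement: it is attributed to O'Sullivan, with the footnote preceding the theorem locating it inside the proof of \cite[Thm.~6.1.1]{O'S}. Your sketch is therefore an expansion of that citation rather than a competitor to any argument in the paper, and it is a faithful outline of how the section $\mathcal{I}$ arises from O'Sullivan's theory of symmetrically distinguished cycles: the construction of $\mathcal{I}$, the verification of $\mathcal{N}\circ\mathcal{I}=\Id$, and the check that graphs of homomorphisms are s-distingu\'es (hence $M^{\ab}=\mathcal{I}\circ\mathcal{N}\circ M^{\ab}$) are all correct.

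One clarification on the uniqueness step, which is correct in spirit but compressed to the point of hiding the mechanism. The assertion that the image of a competing section $\mathcal{I}'$ lies among the s-distingu\'es does not follow from ``$F$-lin\'earit\'e, tensorialit\'e et passage aux facteurs directs'' alone, since not every numerical correspondence is visibly generated by graphs under those operations. What makes the step work is that the \emph{definition} of s-distinction for a cycle $\gamma$ on an abelian scheme is a family of implications ``$P(\gamma)\sim_{\num}0\Rightarrow P(\gamma)=0$ in $\CH$'', where each $P$ is built from $\gamma$ using only pullbacks and pushforwards along homomorphisms (multiplications $n_A$, diagonals, projections), intersection products, and $F$-linear combinations --- precisely the operations a symmetric monoidal $F$-linear functor compatible with $M^{\ab}$ must respect. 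Hence $P(\mathcal{I}'(\beta))=\mathcal{I}'(P(\beta))$, and from $\mathcal{N}\circ\mathcal{I}'=\Id$ one gets $P(\mathcal{I}'(\beta))\sim_{\num}0\Rightarrow P(\beta)=0\Rightarrow \mathcal{I}'(P(\beta))=0$. Spelling this out would make your uniqueness argument self-contained; as written, the phrase ``passage aux facteurs directs'' points in the wrong direction.
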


\begin{cor}
Soient  $\mathcal{N}$ et $\mathcal{I}$ comme dans le Théorème \ref{thm sec}. Soient  $X \in \CHM(S)_F^{\ab}$ un motif et $\mathcal{B}\subset\End(X)$ une $F$-algèbre engendrée par des endomorphismes qui sont dans l'image $\mathcal{I}$. Alors  $\mathcal{N}_{|\mathcal{B}}: \mathcal{B} \rightarrow \End(\mathcal{N}(X)) $ est  un morphisme injectif de $F$-algèbres, et a fortiori, pour toute réalisation $R$ (\ref{def real}(iii)), \[R_{|\mathcal{B}}: \mathcal{B} \rightarrow \End(R(X)) \] l'est aussi.
\end{cor}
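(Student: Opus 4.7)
Le plan est d'exploiter que, par le Théorème~\ref{thm sec}, le foncteur $\mathcal{I}$ est une section de $\mathcal{N}$: l'identité $\mathcal{N}\circ\mathcal{I} = \Id$ entraîne en effet que $\mathcal{I}$ est fidèle sur chaque ensemble de morphismes. La première étape sera alors d'observer que les éléments de $\End(X)$ qui sont dans l'image de $\mathcal{I}$ forment déjà une sous-$F$-algèbre: la stabilité par sommes et multiplications scalaires viendra de la $F$-linéarité de $\mathcal{I}$, et la stabilité par composition de la fonctorialité, sous la forme $\mathcal{I}(\bar b_1)\circ\mathcal{I}(\bar b_2) = \mathcal{I}(\bar b_1 \circ \bar b_2)$. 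Pour que ces compositions aient un sens au niveau des objets, j'utiliserai l'identification canonique $X \cong \mathcal{I}(\mathcal{N}(X))$, qui s'étend de l'égalité $M^{\ab} = \mathcal{I}\circ\mathcal{N}\circ M^{\ab}$ des schémas abéliens à tout objet de $\CHM(S)_F^{\ab}$ par compatibilité de $\mathcal{I}$ avec les twists de Tate et la clôture pseudo-abélienne.

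Comme $\mathcal{B}$ est par hypothèse engendrée par des éléments de l'image de $\mathcal{I}$, j'en déduirai l'inclusion $\mathcal{B} \subseteq \mathcal{I}(\End_{\NumM(S)_F^{\ab}}(\mathcal{N}(X)))$. L'injectivité de $\mathcal{N}|_{\mathcal{B}}$ suivra alors immédiatement: tout $b \in \mathcal{B}$ s'écrira $b = \mathcal{I}(\bar b)$, et la relation $\mathcal{N}(b) = \mathcal{N}(\mathcal{I}(\bar b)) = \bar b$ assurera que $\mathcal{N}(b) = 0$ force $\bar b = 0$, d'où $b = \mathcal{I}(0) = 0$.

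Pour la seconde partie du corollaire, je ferai appel au fait classique que l'équivalence homologique raffine l'équivalence numérique sur les cycles algébriques: pour toute réalisation de Weil $R$ (\ref{def real}(iii)), on a $\ker R \subseteq \ker \mathcal{N}$ au niveau des morphismes de $\CHM(S)_F^{\ab}$. L'injectivité de $\mathcal{N}|_{\mathcal{B}}$ se transmettra alors immédiatement à $R|_{\mathcal{B}}$. L'essentiel de la difficulté est déjà encapsulé dans le Théorème~\ref{thm sec} de O'Sullivan, qui sera utilisé en boîte noire; le seul point à vérifier soigneusement sera plutôt l'identification $X \cong \mathcal{I}(\mathcal{N}(X))$ évoquée plus haut, le reste du raisonnement étant purement formel.
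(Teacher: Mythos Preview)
Your overall strategy is correct and is exactly what the paper leaves implicit (the corollary is stated there without proof). However, the justification you give for the identification $X \cong \mathcal{I}(\mathcal{N}(X))$ has a gap, and in fact this step is an unnecessary detour.

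You claim the identification extends from $M^{\ab} = \mathcal{I}\circ\mathcal{N}\circ M^{\ab}$ to all of $\CHM(S)_F^{\ab}$ by compatibility with the pseudo-abelian closure. This does not work as stated: the endofunctor $\mathcal{I}\circ\mathcal{N}$ agrees with the identity on objects $M(A)$ and on morphisms of the form $M(f)$ for $f$ a morphism of abelian schemes, but \emph{not} on arbitrary correspondences. If $X$ is cut out of some $M(A)$ by a projector $p$, then $\mathcal{I}(\mathcal{N}(X))$ is cut out by the projector $\mathcal{I}(\mathcal{N}(p))$, and there is no reason for $\mathcal{I}(\mathcal{N}(p))=p$. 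One can show the two summands are isomorphic using that $\ker\mathcal{N}$ is a nilideal on abelian motives, but the isomorphism is then non-canonical, so your phrase ``identification canonique'' is not justified, and in any case this is not the argument you sketch.

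The fix is to observe that the detour is unneeded: the hypothesis already forces the \emph{equality} $X = \mathcal{I}(\mathcal{N}(X))$. Indeed, saying that a generator $b\in\End(X)$ lies in the image of $\mathcal{I}$ means $b=\mathcal{I}(\bar b)$ for some morphism $\bar b$ in $\NumM(S)_F^{\ab}$; but then the source object of $\mathcal{I}(\bar b)$, namely $\mathcal{I}(\textrm{source}(\bar b))$, must equal $X$. Applying $\mathcal{N}$ and using $\mathcal{N}\circ\mathcal{I}=\Id$ gives $\textrm{source}(\bar b)=\mathcal{N}(X)$, whence $X=\mathcal{I}(\mathcal{N}(X))$ on the nose. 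With this in hand, the rest of your argument goes through: $\mathcal{I}(\End_{\NumM(S)_F^{\ab}}(\mathcal{N}(X)))$ is a sub-$F$-algebra of $\End(X)$ containing the generators, hence containing $\mathcal{B}$, and $\mathcal{N}\circ\mathcal{I}=\Id$ yields the injectivity of $\mathcal{N}|_{\mathcal{B}}$. Your treatment of the realization part via $\ker R\subseteq\ker\mathcal{N}$ is correct.
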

L'injectivité du morphisme de $F$-algèbres $R:\mathcal{B}_{n,F} \isocan \End_{\Lef(A)}(H^1(A)^{\otimes n})$ du Théorème \ref{thm cle} se déduit donc de la partie (vi) du lemme suivant.

\begin{lem}
Soient $\mathcal{I}$ et $\mathcal{N}$ comme dans le Théorème \ref{thm sec} et $\mathfrak{h}^1(A)$ est le motif défini dans le Théorème \ref{DM}, alors:

(i)   on a l'égalité $\mathcal{I} \circ \mathcal{N}(\mathfrak{h}^1(A))=\mathfrak{h}^1(A)$,

(ii) la restriction de $\mathcal{N}$ à l'algèbre $\End(\mathfrak{h}^1(A))$ est injective,

(iii) les restrictions de $\mathcal{N}$ aux espaces vectoriels $\Hom (\mathbb{L}, \mathfrak{h}^1(A) \otimes \mathfrak{h}^{1}(A))$ et \linebreak $ \Hom (\mathfrak{h}^1(A) \otimes \mathfrak{h}^{1}(A), \mathbb{L})$  sont injectives,

(iv) l'algèbre $\End(\mathfrak{h}^1(A))$ est dans l'image de $\mathcal{I}$,

(v) les espaces vectoriels $\Hom (\mathbb{L}, \mathfrak{h}^1(A) \otimes \mathfrak{h}^{1}(A))$ et  $ \Hom (\mathfrak{h}^1(A) \otimes \mathfrak{h}^{1}(A), \mathbb{L})$  sont dans l'image de $\mathcal{I}$,

(vi) les générateurs de l'algèbre $\mathcal{B}_{n,F}$ (Définition \ref{def Bn}) sont dans l'image de $\mathcal{I}$.
\end{lem}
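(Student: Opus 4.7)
The plan is to exploit the defining property $M^{\ab} = \mathcal{I} \circ \mathcal{N} \circ M^{\ab}$ from Theorem \ref{thm sec}, together with the fact that both $\mathcal{I}$ and $\mathcal{N}$ are $F$-linear symmetric tensor functors. Write $\mathcal{P} := \mathcal{I} \circ \mathcal{N}$; then $\mathcal{P}$ is an $F$-linear symmetric tensor idempotent endofunctor of $\CHM(S)^{\ab}_F$, and an object or morphism lies in the image of $\mathcal{I}$ if and only if it is fixed by $\mathcal{P}$. The strategy is to reduce each claim to checking $\mathcal{P}$-invariance of a few elementary building blocks (the multiplications $M(n_A)$, the endomorphisms $M(f)$ for $f \in \End(A)$, and one chosen Poincar\'e--Lefschetz morphism), and then to combine them. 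Once (iv) and (v) are known, (ii) and (iii) follow formally from $\mathcal{N} \circ \mathcal{I} = \Id$: if $\varphi = \mathcal{I}(\psi)$ and $\mathcal{N}(\varphi) = 0$, then $\psi = \mathcal{N}\mathcal{I}(\psi) = 0$ and hence $\varphi = 0$.

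For (i), I would invoke the Deninger--Murre construction of the projector $\pi^1 \in \End_{\CHM(S)_F}(M(A))$ cutting out $\mathfrak{h}^1(A)$: a standard Vandermonde argument applied to the eigenvalue formula $M(n_A) = \bigoplus n^i \Id_{\mathfrak{h}^i(A)}$ of Theorem \ref{DM} expresses $\pi^1$ as an explicit $\Q$-linear combination of compositions of the $M(n_A)$ for finitely many integers $n$. Each $M(n_A) = M^{\ab}(n_A)$ is $\mathcal{P}$-invariant and $\mathcal{P}$ is $F$-linear, so $\mathcal{P}(\pi^1) = \pi^1$; applying the pseudo-abelian functor $\mathcal{P}$ to the direct summand $\mathfrak{h}^1(A) \subset M(A)$ then yields $\mathcal{P}(\mathfrak{h}^1(A)) = \mathfrak{h}^1(A)$. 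Statement (iv) follows quickly: by Proposition \ref{Ki} every endomorphism of $\mathfrak{h}^1(A)$ is of the form $\mathfrak{h}^1(f) = \pi^1 \circ M(f) \circ \pi^1$ for some $f \in \End(A)^{\op}\otimes_{\Z}F$, and both $\pi^1$ and $M(f)$ are $\mathcal{P}$-invariant.

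The principal technical step is (v). By Lemma \ref{gen ind}(i), each of the two $\Hom$ spaces is generated, via composition with morphisms already covered by (iv), by a single non-zero morphism ($\pi$, respectively $\iota$), so it suffices to exhibit one $\pi \in \Hom(\mathfrak{h}^1(A) \otimes \mathfrak{h}^1(A), \mathbb{L})$ in the image of $\mathcal{I}$. I would take $\pi$ to come from a polarization $\lambda$ of $A/S$, for instance through the first Chern class of the associated Poincar\'e-type line bundle, which produces a pairing $\mathfrak{h}^1(A) \otimes \mathfrak{h}^1(A) \to \mathbb{L}$. The essential input from O'Sullivan's theory is that Chern classes of line bundles on abelian schemes are symmetrically distinguished, hence lie in the image of $\mathcal{I}$; combined with (i) and the tensoriality of $\mathcal{P}$, this gives $\mathcal{P}(\pi) = \pi$. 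I expect this to be the main obstacle of the proof: everything else reduces to $\mathcal{P}$-invariance of the $M(n_A)$'s and the $M(f)$'s, which is built into the defining property of $\mathcal{I}$, whereas for $\pi$ one really needs the specific content of \cite{O'S}.

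Finally, (vi) is obtained by combining (i), (iv) and (v) on the three classes of generators of $\mathcal{B}_{n,F}$. The symmetric group action (a) is part of the symmetric tensor structure of $\CHM(S)^{\ab}_F$ and is preserved by the symmetric tensor functor $\mathcal{I}$. The generators in (b) are handled by (iv) after tensoring with identities, which is legitimate since $\mathcal{I}$ is tensorial and fixes $\Id_{\mathfrak{h}^1(A)}$. The generator (c), namely $P \otimes \Id^{\otimes(n-2)} = \tfrac{1}{2g}(\iota \circ \pi) \otimes \Id^{\otimes(n-2)}$, lies in the image of $\mathcal{I}$ by (v) together with the $F$-linearity and tensoriality of $\mathcal{I}$. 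Statements (ii) and (iii) are then deduced from (iv) and (v) as explained in the first paragraph, concluding the proof.
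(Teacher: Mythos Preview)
Your argument for (i), (iv) and (vi) is correct and in fact your direct proof of (iv) is cleaner than the paper's: writing $\mathfrak{h}^1(f)=\pi^1\circ M(f)\circ\pi^1$ and using $\mathcal{P}$-invariance of $\pi^1$ and of each $M(f)$ avoids any appeal to Kimura's nilpotence. Your deduction of (ii) and (iii) from (iv) and (v) is also valid.

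The gap is in (v). Your justification rests on the assertion that ``Chern classes of line bundles on abelian schemes are symmetrically distinguished'', but this is false as stated (an anti-symmetric divisor class lies in $\Pic^0$, hence is numerically trivial; if it were symmetrically distinguished it would be zero in $\CH$, which it need not be). More importantly, even restricting to symmetric line bundles you would still have to explain precisely how $\pi\in\Hom(\mathfrak{h}^1(A)^{\otimes 2},\mathbb{L})\subset\CH^{2g-1}(A^2)$ is built from such a first Chern class; this is not the degree-$1$ class you describe, and making it work requires combining the diagonal with a $(g-1)$-st power of a symmetric polarization class, all of which would need separate justification from~\cite{O'S}. This is exactly the kind of direct verification of ``symmetrically distinguished'' that the paper's Remark~7.4 warns against.

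The point you are missing is that no extra input from O'Sullivan is needed at all: (v) follows from what you already have. From your (iv) you deduce (ii); then composing with the Poincar\'e--Lefschetz isomorphism $\PL$ (Corollary~\ref{coro Ku}) transports the injectivity of $\mathcal{N}$ on $\End(\mathfrak{h}^1(A))$ to injectivity on $\Hom(\mathfrak{h}^1(A)^{\otimes 2},\mathbb{L})$ and $\Hom(\mathbb{L},\mathfrak{h}^1(A)^{\otimes 2})$, giving (iii). Now $\mathcal{P}$ is an injective idempotent on these finite-dimensional spaces, hence the identity, which is (v). Equivalently, and closer to your style: $\mathcal{P}(\PL)$ is again an isomorphism $\mathfrak{h}^1(A)\isocan\mathfrak{h}^1(A)^\vee(-1)$ (functors preserve isomorphisms), it is $\mathcal{P}$-fixed since $\mathcal{P}$ is idempotent, and Lemma~\ref{gen ind}(i) applied with this new $\PL$ exhibits every element of the two $\Hom$-spaces as a composition of $\mathcal{P}$-fixed morphisms. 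Either route closes the gap without invoking anything beyond Theorem~\ref{thm sec}.
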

\begin{proof}
(i)   Les $\mathfrak{h}^i(A)$ sont caractérisés par la propriété que, pour tout $n$, l'endomorphisme $M(n_A)$ restreint à $\mathfrak{h}^i(A)$ vaut $n^i \Id$ (Théorème \ref{DM}), or, par le Théorème \ref{thm sec}, on a $M(n_A)= \mathcal{I} \circ  \mathcal{N} \circ M(n_A).$

(ii) Par la Proposition \ref{Ki}, $\End(\mathfrak{h}^1(A))$ est une $F$-algèbre semisimple de dimension finie, le noyau est donc engendré par un élément $f$ qui vérifie $f^2=f.$ D'autre part par \cite[Prop. 7.2(ii)]{Kim}, $f$ doit être nilpotent, donc $f=0$. 
 
(iii) Ceci découle de (ii) en utilisant le corollaire \ref{coro Ku}. 

(iv) Par (i) on a un endomorphisme de $F$-algèbres \[\mathcal{I} \circ \mathcal{N}_{|\End(\mathfrak{h}^1(A))}: \End(\mathfrak{h}^1(A))\longrightarrow \End(\mathfrak{h}^1(A)).\] Par (ii) c'est un morphisme injectif. Par la Proposition \ref{Ki} la $F$-algèbre $\End(\mathfrak{h}^1(A))$ est de dimension finie, donc le morphisme est aussi surjectif (ou en alternative, comme le foncteur $\mathcal{N}$ est plein par construction, $\mathcal{I} \circ \mathcal{N}_{|\End(\mathfrak{h}^1(A))}$ est injectif si et seulement s'il est surjectif).

(v) Analogue à (iv) en remplaçant (ii) par (iii).

(vi) Considérons la liste des générateurs de $\mathcal{B}_{n,F}$ (Définition \ref{def Bn}): (a) le groupe symétrique, (b) l'action provenant de $\End(A)^{\op}$, (c) le projecteur $P=\frac{1}{2g}\iota \circ \pi.$ Alors (a) est automatique parce que le foncteur $\mathcal{I}$ est symétrique, (b) se déduit de (iv) et (c) de (v).

\end{proof}
\begin{rem} Dans \cite{O'S} les cycles appartenant à l'image du foncteur $\mathcal{I}$ sont appelés \og symmetrically distinguished\fg. O'Sullivan montre qu'il est possible de donner une définition alternative de ces cycles qui ne fasse pas intervenir le foncteur $\mathcal{I}$. Cette définition \cite[page 2]{O'S} demande que certains sous-espaces de groupes de Chow construits à partir d'un cycle donné n'intersectent pas le noyau du foncteur $\mathcal{N}$. Cette condition nous semble, dans la pratique, moins facile à être vérifiée.
\end{rem}
\begin{rem}\label{alternative}
Dans la thèse de l'auteur, l'injectivité du morphisme de $F$-algèbres $R:\mathcal{B}_{n,F} \isocan \End_{\Lef(A)}(H^1(A)^{\otimes n})$ du Théorème \ref{thm cle} est démontrée par des méthodes plus élémentaires, basées sur la théorie classique des invariants, dans le même esprit de la preuve de la surjectivité dans la section  6 (ces méthodes n'utilisent pas le Théorème de scindage \ref{thm sec} de O'Sullivan). Le point-clé est que les relations provenant de la théorie des invariants \cite[Thm 4.2, Thm 5.7, Thm 6.7]{DeCon} ne dépendent essentiellement que de la dimension des espaces vectoriels sous-jacents et  elles sont donc motiviques par le Théorème \ref{Ku}(\ref{dimKu}). Le lecteur intéressé par cette approche pourra consulter \cite{Anc}.

Nous avons pris connaissance de \cite{O'S} après avoir soumis \cite{Anc}. Nous avons choisi ici d'utiliser \cite{O'S} pour plusieurs raisons. Premièrement, nous voulions mettre en lumière ce travail remarquable et expliquer le lien avec le nôtre. Deuxièmement, il nous semble utile de donner un exemple explicite de cycles dans l'image $\mathcal{I}$ (à savoir l'algèbre $\mathcal{B}_{n,F}$). À notre connaissance ceci est le premier exemple présent dans la littérature.  
\end{rem}
\begin{rem}
Le résultat suivant est dû à Moonen \cite[Corollary 9.4]{Moo} (et répond à une question de Voisin): si un cycle d'une variété abélienne peut être écrit comme combinaison linéaire  d'intersections de diviseurs symétriques, alors il est numériquement trivial si et seulement s'il est rationnellement trivial. En suivant les méthodes présentées dans cette section, on voit qu'on peut déduire ce résultat du Théorème \ref{thm sec} de O'Sullivan. Il est  aussi possible de déduire ce résultat directement de l'énoncé du Théorème \ref{thm cle} (et donc, en suivant la Remarque \ref{alternative}, de la théorie classique des invariants).
\end{rem}

\section{Conséquences}
Dans cette section nous donnons des conséquences du Théorème \ref{thm cle}. Le Théorème \ref{cor princ} est une généralisation du Théorème \ref{thm intro} de l'introduction. La deuxième partie de la section est dévouée aux variétés de Shimura de type PEL. Nous démontrons le Théorème \ref{automorphe} de l'introduction ainsi qu'un résultat relié (Théorème \ref{thm rep}).
\begin{npar}{L'algèbre $B_{i,r}$}\label{Bir}
Nous gardons ici les notations rappelés au début de la section 6. Pour tous naturels $i$ et $r$ considérons les identifications qui suivent:
\[\mathfrak{h}^i(A^r)= \Sym^i \mathfrak{h}^1(A^r)\]
du Théorème \ref{Ku}(i),
\[\mathfrak{h}^1(A)^{\oplus r}= \mathfrak{h}^1(A^r)\]
de la Proposition \ref{Ki}, et 
\[(\mathfrak{h}^1(A)^{\oplus r})^{\otimes i}=(\mathfrak{h}^1(A)^{\otimes i})^{\oplus r^i}.\]
Sous ces identifications, l'algèbre $\mathcal{B}_{i,F}^{\oplus r^i}\subseteq\End_{\CHM(S)_F}((\mathfrak{h}^1(A)^{\otimes i})^{\oplus r^i})$ est canoniquement une sous-algèbre de $\End_{\CHM(S)_F}((\mathfrak{h}^1(A^r))^{\otimes i})$. De plus, le projecteur 
  \[P_i=\sum_{\sigma \in \mathcal{S}_i} \sigma\]
  définissant $\Sym^i \mathfrak{h}^1(A^r)$ comme facteur direct de $(\mathfrak{h}^1(A^r))^{\otimes i}$ appartient à l'algèbre. Définissons
  \[B_{i,r}\otimes_{\Q} F=P_i \circ \mathcal{B}_{i,F}^{\oplus r^i} \circ P_i, \]
  sous-algèbre de  $\mathcal{B}_{i,F}^{\oplus r^i}$. La notation est justifiée par la Remarque \ref{rem Bn}(iii). Sous les identifications ci-dessus on a l'inclusion canonique
  \[B_{i,r}\otimes_{\Q} F \subset \End_{\CHM(S)_F}(\mathfrak{h}^i(A^r)).\]
\end{npar}
\begin{thm}\label{cor princ}
La réalisation induit un isomorphisme d'algèbres  
\[R:B_{i,r}\otimes_{\Q} F \isocan \End_{\Lef(A)}(H^i(A^r)),\]
pour tous $i$ et $r$.
Ceci fournit un relèvement explicite de l'algèbre $\End_{\Lef(A)}(H^i(A^r))$, en particulier toute décomposition de $H^i(A^r)$ en tant que {$\Lef(A)$-représentation} se relève canoniquement  en une décomposition de $\mathfrak{h}^i(A^r)$ dans $\CHM(S)_F$ et de plus, deux sous-représentations de $H^i(A^r)$ isomorphes se relèvent en deux motifs isomorphes.
\end{thm}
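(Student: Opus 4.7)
I would derive Theorem~\ref{cor princ} as a formal consequence of Theorem~\ref{thm cle} applied with $n = i$, combined with the identifications set up in paragraph~\ref{Bir}.

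First I would verify that the realization functor $R$ is compatible with all of the motivic identifications used to define $B_{i,r}$. By Proposition~\ref{coho abel}(i), $R(\mathfrak{h}^j(A^r)) = H^j(A^r)$ for every $j$; and since $R$ is $F$-linear, symmetric monoidal and respects direct sums, it carries the chain of motivic identifications
\[\mathfrak{h}^1(A^r) = \mathfrak{h}^1(A)^{\oplus r}, \quad (\mathfrak{h}^1(A)^{\oplus r})^{\otimes i} = (\mathfrak{h}^1(A)^{\otimes i})^{\oplus r^i}, \quad \mathfrak{h}^i(A^r) = \Sym^i \mathfrak{h}^1(A^r)\]
to their cohomological analogues, and sends the symmetrizing projector $P_i$ to its cohomological counterpart. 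In particular, $B_{i,r}\otimes_{\Q} F$ is by construction equal to $P_i \circ \mathcal{B}_{i,F}^{\oplus r^i} \circ P_i$, while $\End_{\Lef(A)}(H^i(A^r))$ equals $P_i \cdot \End_{\Lef(A)}(H^1(A^r)^{\otimes i}) \cdot P_i$.

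Next I would match the two ``unsymmetrized'' algebras. Since $\Lef(A) \subseteq \GL(H^1(A))$ acts diagonally on $H^1(A^r) = H^1(A)^{\oplus r}$, the expansion $(H^1(A)^{\oplus r})^{\otimes i} = (H^1(A)^{\otimes i})^{\oplus r^i}$ gives
\[\End_{\Lef(A)}\bigl(H^1(A^r)^{\otimes i}\bigr) = M_{r^i}\bigl(\End_{\Lef(A)}(H^1(A)^{\otimes i})\bigr),\]
which via Theorem~\ref{thm cle} is the realization of the $r^i \times r^i$ matrix extension of $\mathcal{B}_{i,F}$, that is, of $\mathcal{B}_{i,F}^{\oplus r^i}\subseteq \End_{\CHM(S)_F}(\mathfrak{h}^1(A^r)^{\otimes i})$. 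Restricting both sides to the image of $P_i$ then yields the desired isomorphism $R : B_{i,r}\otimes_{\Q} F \isocan \End_{\Lef(A)}(H^i(A^r))$.

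Finally, the assertion about lifting decompositions is formal. Any decomposition of $H^i(A^r)$ into $\Lef(A)$-subrepresentations corresponds to a complete system $p_1,\ldots,p_j$ of orthogonal idempotents in $\End_{\Lef(A)}(H^i(A^r))$; these lift uniquely via the above isomorphism to orthogonal idempotents $\pi_1,\ldots,\pi_j \in B_{i,r}\otimes_{\Q} F \subseteq \End_{\CHM(S)_F}(\mathfrak{h}^i(A^r))$, which, by pseudo-abelianness of $\CHM(S)_F$ (paragraph~\ref{Motifs de Chow relatifs}), produce the required decomposition of $\mathfrak{h}^i(A^r)$. Isomorphisms between two cohomological sub-representations correspond to units of the endomorphism algebra intertwining the relevant idempotents, which also lift by the same isomorphism, giving motivic isomorphisms between the corresponding sub-motifs. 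The main ``obstacle'' is really just careful bookkeeping of how $R$ interacts with the direct sum, tensor power and symmetric power identifications and how the matrix structure $M_{r^i}$ matches the notation $\mathcal{B}_{i,F}^{\oplus r^i}$; no substantive new ingredient is required beyond Theorem~\ref{thm cle}, which already contains all the nontrivial input.
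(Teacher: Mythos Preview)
Your proposal is correct and is exactly the argument the paper has in mind: Theorem~\ref{cor princ} is stated without proof because it follows formally from Theorem~\ref{thm cle} via the identifications of paragraph~\ref{Bir}, and you have spelled out precisely those formal steps (including the key observation that $\End_{\Lef(A)}((H^1(A)^{\oplus r})^{\otimes i})$ is the $r^i\times r^i$ matrix algebra over $\End_{\Lef(A)}(H^1(A)^{\otimes i})$, which is what the paper's notation $\mathcal{B}_{i,F}^{\oplus r^i}$ is meant to lift). One small terminological slip: isomorphisms between two subrepresentations are not ``units'' of the ambient algebra but rather elements giving a Murray--von Neumann equivalence between the corresponding idempotents; the lifting argument is unchanged.
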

 \begin{npar}{Données PEL}
 Dorénavant nous traitons le cas des variétés de Shimura de type PEL et nous suivons les notations du Théorème \ref{automorphe}. Pour les généralités sur les données PEL le lecteur pourra consulter \cite{Kott}. Rappelons qu'une donnée PEL est en particulier un $\Q$-espace vectoriel $V$ muni de l'action d'une algèbre semisimple $B$ (avec une anti-involution positive) et d'une forme bilinéaire antisymétrique. Le groupe $G$ de la donnée est défini comme le groupe  des endomorphismes de $V$ commutant à $B$ et respectant la forme bilinéaire à un scalaire près. On a en particulier une suite exacte
 \[1 \rightarrow G_1 \rightarrow G \rightarrow \mathbb{G}_{m,\Q}\rightarrow 1,\] où $G_1$ est le sous-groupe de $G$ des endomorphismes qui respectent la forme bilinéaire (comparer avec la Définition \ref{gpe lef} de groupe de Lefschetz). On notera $\Q(n)$ l'espace  vectoriel de dimension $1$ sur lequel $G$ agit via l'action de poids $n$ de $\mathbb{G}_{m,\Q}.$ L'opération de tensoriser par $\Q(n)$ dans $\Rep(G)$ est appelée \textit{twists de Tate}.
 \end{npar}
 
 \begin{npar}{Construction canonique \cite[\S 1.18]{Pi}}
 Soit $F$ un corps de caractéristique zéro. Le foncteur de construction canonique
 \[\mu: \Rep (G_F) \longrightarrow \VHS(S_0(\C))_F\]
est un  foncteur $F$-linéaire, tensoriel et symétrique qui commute aux twists de Tate et qui envoie  $V^{\vee}$ sur $R^1 f_*\Q_{A_0}$. 

Rappelons aussi qu'on dispose d'un foncteur de \og réalisation de Hodge\fg
 \[\CHM(S_0)_F \rightarrow \VHS(S_0(\C))_F,\] 
 voir l'Exemple \ref{exe weil}(i).
 \end{npar}
 \begin{prop}
 Suivons les notation du Théorème \ref{automorphe} et soit $F$ un corps de caractéristique zéro, alors:
 
 (i) L'algèbre $B_{i,r}\otimes_{\Q} F$ (\S \ref{Bir}) relève l'algèbre $\mu(\End_{G_F}(\Lambda^i((V^{\vee})^{\oplus r})))$.
 
 (ii) L'algèbre $B_{n,F}$ (Définition \ref{def Bn}) relève l'algèbre $\mu(\End_{G_F}((V^{\vee})^{\otimes n}))$.
 \end{prop}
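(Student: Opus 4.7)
The proposition is obtained by directly applying Theorem \ref{cor princ} to the universal abelian scheme $A_0/S_0$ for part (i), and Theorem \ref{thm cle} for part (ii), once we match the Lefschetz-group action on the cohomology of $A_0^r$ with the $G$-action provided by the canonical construction $\mu$ through the PEL datum.

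First, I would verify compatibility of $\mu$ with the Hodge realization of Chow motives. By the very definition of canonical construction (see \cite[1.18]{Pi}), $\mu(V^{\vee})$ is the variation of Hodge structures $R^1 f_* \Q_{A_0}$ on $S_0(\C)$, which is also the Hodge realization (Example \ref{exe weil}(i)) of $\mathfrak{h}^1(A_0) \in \CHM(S_0)_\Q$. Since both $\mu$ and the realization are $F$-linear symmetric tensor functors commuting with Tate twists, using the identifications $\mathfrak{h}^1(A_0^r) = \mathfrak{h}^1(A_0)^{\oplus r}$ (Proposition \ref{Ki}) and $\mathfrak{h}^i(A_0^r) = \Sym^i \mathfrak{h}^1(A_0^r)$ (Theorem \ref{Ku}(i)), together with the Koszul sign rule that turns $\Sym$ on the motivic side into $\wedge$ on the cohomological side (Remark after Theorem \ref{Ku}), one gets a natural identification of $\mu(\Lambda^i((V^{\vee})^{\oplus r}))$ with the Hodge realization of $\mathfrak{h}^i(A_0^r)$. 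The fiber of this VHS at a point $s \in S_0(\C)$ is $H^i(A_{0,s}^r, F)$, endowed with a $G_F$-action coming from $\mu$.

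Second, I would identify the relevant groups. By the PEL moduli description, $\End(A_0/S_0) \otimes_{\Z} \Q = B$, and the universal polarization induces, on the fiber at $s$, the bilinear form prescribed by the PEL datum on $V$. Therefore, under $\mu$, the Lefschetz group $\Lef(A_0) \subset \GL(H^1(A_{0,s}, F))$ (Definition \ref{gpe lef}) coincides with $G_{1,F} \subset \GL(V^{\vee} \otimes_\Q F)$, the subgroup of $G_F$ preserving the form exactly (not up to a scalar). Moreover, for any pure-weight representation $W \in \Rep(G_F)$, one has $\End_{G_F}(W) = \End_{G_{1,F}}(W)$, since $G_F / G_{1,F} = \G_{m,F}$ acts on $W$ by a single scalar. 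The representations $\Lambda^i((V^{\vee})^{\oplus r})$ and $(V^{\vee})^{\otimes n}$ are pure of weight $-i$ and $-n$ respectively, so this applies.

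Combining these identifications gives an equality $\mu(\End_{G_F}(\Lambda^i((V^{\vee})^{\oplus r}))) = \End_{\Lef(A_0)}(H^i(A_0^r, F))$ as subalgebras of $\End(\mu(\Lambda^i((V^{\vee})^{\oplus r})))$. Part (i) then follows from Theorem \ref{cor princ}, which provides the isomorphism $R: B_{i,r} \otimes_\Q F \isocan \End_{\Lef(A_0)}(H^i(A_0^r, F))$. Part (ii) is identical, using $(V^{\vee})^{\otimes n}$ in place of $\Lambda^i((V^{\vee})^{\oplus r})$ and invoking Theorem \ref{thm cle}. The main obstacle is the rigorous verification of the identification $\Lef(A_0)_F = G_{1,F}$ in the second step: this requires that at the relevant base point the endomorphism algebra of $A_0$ be exactly $B$, and that the pairing induced by the universal polarization agree (under the $\mu$-identification) with the chosen bilinear form on $V$. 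These are standard facts of PEL theory \cite{Kott}, but they have to be spelled out carefully since the Lefschetz group is rigid (no similitude factor), whereas $G$ includes the similitude character.
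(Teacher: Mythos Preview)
Your proposal is correct and follows essentially the same approach as the paper: apply Theorem \ref{cor princ} for (i) and Theorem \ref{thm cle} for (ii), after identifying $\Lef(A_0)$ with $G_{1,F}$ via $\mu$, and then observe that passing from $G_1$ to $G$ is harmless because the similitude factor $\mathbb{G}_m$ acts by scalars on pure-weight representations. The paper's proof is a two-line version of exactly this; your write-up spells out more of the identifications (and correctly flags the caveat that $\End(A_0)\otimes\Q=B$ requires the PEL moduli interpretation), but there is no substantive difference in strategy.
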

 \begin{proof}
Le point (i) est un cas particulier du Théorème \ref{cor princ} et le point (ii) est un cas particulier du Théorème \ref{thm cle}. En effet  le foncteur $\mu$ identifie $G_1$ avec le groupe de Lefschetz $\Lef(A_0)$. Le passage de $G_1$ à $G$ est anodin, car $G$ est engendré par $G_1$ et les homothéties $\mathbb{G}_m \subset G$.
\end{proof}
\begin{proof}[Démonstration du Théorème \ref{automorphe}]
 L'énoncé se déduit formellement du point (i) de la proposition précedente (voir par exemple les lignes qui suivent le Théorème \ref{thm intro}).
 \end{proof}
 
 \begin{thm}\label{thm rep}
Le foncteur $\mu: \Rep (G_F)\rightarrow \VHS(S_0(\C))_F$  se relève en un  foncteur $F$-linéaire et tensoriel\footnote{Le foncteur n'est pas symétrique à cause de la règle des signes de Koszul.} $\tilde{\mu}: \Rep (G_F)\rightarrow \CHM(S_0)_F$  qui commute aux twists de Tate et qui envoie  $V^{\vee}$ sur $h^1(A)$ et $\End_{\Rep(G_F)}((V^{\vee})^{\otimes n})$ sur $\mathcal{B}_{n,F}$. 
\end{thm}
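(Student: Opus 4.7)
Le plan est d'exploiter la structure d'enveloppe pseudo-abélienne de $\CHM(S_0)_F$ et la semisimplicité de $\Rep(G_F)$ (qui provient du fait que $G_F$ est réductif pour $F$ de caractéristique zéro). Tout objet $W \in \Rep(G_F)$ s'écrit comme image d'un projecteur $G_F$-équivariant dans un objet $T$ de la forme $\bigoplus_j (V^{\vee})^{\otimes n_j} \otimes \Q(m_j)$, et tout morphisme se décrit similairement via ces projecteurs.

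Première étape: définir $\tilde{\mu}$ sur la sous-catégorie pleine $\mathcal{C}_0 \subset \Rep(G_F)$ formée des sommes directes finies des $(V^{\vee})^{\otimes n} \otimes \Q(m)$. On pose $\tilde{\mu}((V^{\vee})^{\otimes n} \otimes \Q(m)) := \mathfrak{h}^1(A_0)^{\otimes n}(m)$ et on prolonge additivement. Pour les morphismes, on remarque que la forme bilinéaire de la donnée PEL fournit une identification $G_F$-équivariante $V \cong V^{\vee}\otimes \Q(1)$ (à scalaire près), permettant de réécrire tout $\Hom_{G_F}((V^{\vee})^{\otimes n}\otimes \Q(m),(V^{\vee})^{\otimes n'}\otimes \Q(m'))$ comme sous-espace de $\End_{G_F}((V^{\vee})^{\otimes (n+n')})$ tordu par un $\Q(k)$ convenable. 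La proposition précédente relève canoniquement cet $\End_{G_F}$ dans $\mathcal{B}_{n+n',F}$; comme par la Remarque \ref{rem Bn}(ii) et le Corollaire \ref{coro n=1} les morphismes $\iota$, $\pi$ de Poincaré-Lefschetz appartiennent à $\mathcal{B}_{n+n',F}$, on convertit ce relèvement en un morphisme canonique dans $\Hom_{\CHM(S_0)_F}(\mathfrak{h}^1(A_0)^{\otimes n}(m), \mathfrak{h}^1(A_0)^{\otimes n'}(m'))$. La compatibilité à la composition résulte de ce que le Théorème \ref{thm cle} est un \emph{isomorphisme} d'algèbres.

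Deuxième étape: prolonger $\tilde{\mu}$ à tout $\Rep(G_F)$ par complétion de Karoubi. Pour $W = \Im(e)$ avec $e \in \End_{G_F}(T)$ projecteur et $T \in \mathcal{C}_0$, le projecteur $e$ se relève de manière unique en un projecteur $\tilde{e} \in \End_{\CHM(S_0)_F}(\tilde{\mu}(T))$ (par l'unicité du relèvement issue de l'injectivité dans le Théorème \ref{thm cle}); on pose alors $\tilde{\mu}(W) := \Im(\tilde{e})$, qui existe puisque $\CHM(S_0)_F$ est pseudo-abélienne. L'indépendance par rapport au choix de la présentation $(T,e)$ découle de la même unicité. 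On traite les morphismes par une méthode analogue en les étendant aux ambiants et en restreignant après relèvement.

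Dernière étape: vérifier les propriétés de l'énoncé. La $F$-linéarité est évidente. L'envoi $V^{\vee}\mapsto \mathfrak{h}^1(A_0)$ et la commutation aux twists de Tate sont tautologiques, et l'envoi de $\End_{\Rep(G_F)}((V^{\vee})^{\otimes n})$ sur $\mathcal{B}_{n,F}$ est précisément le contenu de la proposition précédente. La compatibilité à $\mu$ via la réalisation de Hodge se lit sur les générateurs et s'étend par fonctorialité. Quant à la compatibilité à la structure tensorielle, elle provient de la concaténation canonique $\mathcal{B}_{n,F}\otimes_F \mathcal{B}_{m,F} \to \mathcal{B}_{n+m,F}$ induite par la formule de Künneth; la non-symétrie du foncteur, déjà signalée en note de bas de page, reflète les signes de Koszul dans $\CHM(S_0)_F$. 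L'obstacle principal sera le traitement cohérent et systématique des espaces de $\Hom$ (et non seulement d'endomorphismes) entre puissances tensorielles tordues, ainsi que la vérification de l'indépendance par rapport à la présentation $W = \Im(e)$; tous deux se ramènent à l'unicité du relèvement, qui est le cœur technique du Théorème \ref{thm cle}.
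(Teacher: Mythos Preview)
Your proposal is correct and follows essentially the same strategy as the paper: reduce to $\Hom$-spaces between tensor powers, convert these to $\End$-spaces of $(V^{\vee})^{\otimes n}$ via duality, lift by the preceding proposition (i.e.\ Theorem~\ref{thm cle}), and extend by pseudo-abelian completion. The paper's reduction is slightly slicker---it takes $V^{\otimes a}\otimes (V^{\vee})^{\otimes b}$ as generators and uses straight adjunction to land directly in $\Hom_{G_F}((V^{\vee})^{\otimes b+c},(V^{\vee})^{\otimes a+d})$, which is zero unless $b+c=a+d$ and then equals an $\End$-space---whereas you route everything through $(V^{\vee})^{\otimes n}\otimes\Q(m)$ and the Poincar\'e--Lefschetz isomorphism; your phrase ``sous-espace de $\End_{G_F}((V^{\vee})^{\otimes(n+n')})$'' is a bit loose, but the underlying identification is the same one.
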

\begin{rem}
Explicitement, un objet quelconque de la catégorie $\Rep (G_F)$ est isomorphe à un facteur direct d'une somme finie de la forme $\bigoplus_i V^{\otimes a_i} \otimes (V^{\vee})^{\otimes b_i}.$  Ceci découle du fait que $G_{\overline{F}}$ est un produit de groupe classique et $V$ contient les représentations standards de ces groupes \cite[\S 2]{Milne}.
\end{rem}
\begin{proof}
Par la remarque précédente (et par linéarité) il suffit de déterminer l'image de $\Hom_{G_F}( V^{\otimes a} \otimes (V^{\vee})^{\otimes b} ,  V^{\otimes c} \otimes (V^{\vee})^{\otimes d})$. Par adjonction on se réduit à l'étude de $\Hom_{G_F}((V^{\vee})^{\otimes b+c} ,   (V^{\vee})^{\otimes a+d})$. 

Cet espace vectoriel est nul si $b+c \neq a+d$ (regarder l'action des homothétiés $\mathbb{G}_m \subset G$). Si $b+c = a+d$ on conclut en utilisant le point (ii) de la proposition précédente.

\end{proof}

\section{Coda: Motifs associés aux formes modulaires}\label{motif form} 
La motivation originale de ce travail vient du programme de Wildeshaus \cite{Wild3,Wild4} pour la construction de motifs associés aux formes modulaires. Nous souhaitons donner ici un survol de ce programme et expliquer le rôle du Théorème \ref{automorphe}.
 
 Soient $S_0$ le modèle canonique d'une variété de Shimura de type PEL, $A_0$ le schéma abélien universel au-dessus de $S_0$ et $A_0^r$ le $r$-ième produit fibré de $A_0$ au-dessus de $S_0$. Notons $k$ le corps reflexe; les variétés $S_0,A_0$ et $A_0^r$ sont définies sur $k$. On notera  $\DM(k)$ la catégorie triangulée des motifs mixtes sur $k$ définie par Voevodsky et al. \cite{Voe}, $\CHM(S_0)$ et $\CHM(k)$ noteront la catégorie des motifs de Chow au-dessus de $S_0$ et de $k$ respectivement et $M_S(A_0^r)\in \CHM(S_0)$ et $M(A_0^r)\in \DM(k)$ seront le motif de $A_0^r$ au-dessus de $S_0$ et de $k$ respectivement.
 
 \

 Dans \cite{SchMod}, Scholl  étudie le cas des courbes modulaires. Il construit un motif de Chow au-dessus de $k$ sur lequel l'algèbre de Hecke agit, et grâce à cette action il peut découper des sous-motifs  dont les réalisations sont les représentations  $\ell$-adiques (construites par Deligne \cite{Delmod}) associées aux formes modulaires classiques. Ce motif est un facteur direct du motif de $\overline{A_0^r}$, une compactification (lisse) de $A_0^r$ sur laquelle l'algèbre de Hecke agit.
 
Cette approche n'est pas généralisable: quand $S_0$ n'est pas une courbe modulaire on ne connaît pas de compactifications lisses de $A_0^r$ munies de l'action de l'algèbre de Hecke. L'idée de Wildeshaus est de construire directement des motifs purs (de Chow) sur lesquels l'algèbre de Hecke agit.

\

D'après  \cite[Thm 4.3]{Wild3}, sous une condition de \og évitement de poids\footnote{Poids ici est au sens de Bondarko \cite{Bon}. C'est une notion motivique qui relève la notion classique de filtration de poids en Théorie de Hodge ou en cohomologie $\ell$-adique.}\fg, il est possible d'associer à un motif (mixte) dans $\DM(k)$ un motif de Chow canonique qui représente sa partie pure. Dans le cadre géométrique qui nous intéresse, ce motif serait muni d'une action canonique de l'algèbre de Hecke. La condition d'évitement de poids se teste sur le motif bord $\partial M(A_0^r)\in \DM(k)$  de $A_0^r$. Le motif bord d'une variété lisse $X$ est défini dans \cite{WildBound} et mesure le défaut de compacité de la variété. D'après loc. cit., on dispose d'un triangle canonique dans $\DM(k)$, 
\[\partial M(X) \longrightarrow M(X)\longrightarrow M_c(X)\stackrel{+1}{\longrightarrow},\]
où $M_c(X)$ est le motif à support compact de $X$ \cite{Voe}.

Cette condition d'évitement de poids n'est pas vérifiée sur le motif $\partial M(A_0^r)$ tout entier (elle n'est même pas vérifiée après réalisation). D'autre part, d'après  \cite[Theorem 2.2]{Wild5}, tout facteur direct $M_S(A_0^r)^p$ du motif de Chow de  $A_0^r$ (dans $\CHM(S_0)$) induit un triangle canonique
\[\partial M(A_0^r)^p \longrightarrow M(A_0^r)^p\longrightarrow M_c(A_0^r)^p\stackrel{+1}{\longrightarrow}\]
dans $\DM(k)$ ($M(A_0^r)^p$ et $M_c(A_0^r)^p$ ne sont rien d'autre que l'image directe $\alpha_*$ et l'image directe à support compact $\alpha_!$ de $M_S(A_0^r)^p$ le long du morphisme structurel $\alpha: S_0 \longrightarrow k$).

Ici l'intérêt du Théorème \ref{automorphe}: nous disposons des $\partial M(A_0^r)^p$, facteurs directs  de $\partial M(A_0^r)$, pour lesquels on peut tester la condition d'évitement de poids. De plus, leur réalisation est en principe calculable en utilisant \cite{BW}. Des calculs explicites ont été fait dans \cite{Wild4} pour les variétés de Hilbert-Blumenthal et dans \cite{Anc14} pour les surfaces modulaires de Picard. Dans les deux cas la \og majorité \fg{} des $\partial M(A_0^r)^p$ vérifient l'évitement de poids \textbf{après réalisation}, précisement il s'agit des facteurs qui correspondent via le Théorème \ref{automorphe} aux représentations régulières du groupe sous-jacent à la donnée de Shimura (c'est-à-dire les représentations qui ne sont pas sur un mur de la chambre de Weyl). 

Il est raisonnable d'espérer que ces mêmes facteurs vérifient la condition d'évitement de poids  au niveau motivique. Ceci a été démontré dans \cite{Wild4} pour les variétés de Hilbert-Blumenthal. D'après la discussion ci-dessus, la condition d'évitement de poids pour $\partial M(A_0^r)^p$ implique l'existence d'un motif de Chow canonique $\overline{M(A_0^r)^p}\in \CHM(k)$, muni d'une action canonique de l'algèbre de Hecke, et qui se réalise dans la partie pure de la réalisation de $M(A_0^r)^p$.

\bibliographystyle{alpha}
\selectlanguage{french}

\end{document}